\newtheorem*{thmnn}{Theorem}
\newtheorem*{mt}{Main Theorem}
\newtheorem{prop}{Proposition}
\newtheorem{lemma}{Lemma}
\newtheorem{cor}{Corollary}
\newcommand{\Q}{\mathbb{Q}}
\newcommand{\Z}{\mathbb{Z}}
\newcommand{\R}{\mathbb{R}}
\newcommand{\C}{\mathbb{C}}
\newcommand{\sym}{\mathrm{sym}}
\newcommand{\res}{\mathop{\rm res}}
\newcommand{\real}{\mathop{\rm Re}}
\newcommand{\imag}{\mathop{\rm Im}}
\title{Transition Mean Values of Shifted Convolution Sums}
\author{Ian Petrow}
\begin{document}
\maketitle

\begin{abstract}
Let $f$ be a classical holomorphic cusp form for $SL_2(\Z)$ of weight $k$ which is a normalized eigenfunction for the Hecke algebra, and let $\lambda(n)$ be its eigenvalues.  In this paper we study ``shifted convolution sums'' $\sum_n \lambda(n) \lambda(n+h)$ after averaging over many shifts $h$ and obtain asymptotic estimates.  The result is somewhat surprising: one encounters a transition region depending on the ratio of the square of the length of the average over $h$ to the length of the shifted convolution sum.  The phenomenon is similar to that encountered by Conrey, Farmer and Soundararajan in their 2000 paper \cite{nondifferentiable} on transition mean values of the Jacobi symbol, and the connection of both results to Eisenstein series and multiple Dirichlet series is discussed.
\end{abstract}

\vspace{10pt}

Let $f$ be a classical holomorphic Hecke eigencuspform for $SL_2(\Z)$ of weight $k\in 2 \Z, k>0$.  It admits a Fourier expansion of the form \[f(z) = \sum_{n\geq 1} n^{\frac{k-1}{2}}\lambda_f(n) e(nz),\] where the $\lambda_f(n)$ are the Hecke eigenvalues of $f$, normalized so that $\lambda_f(1) = 1$, and we use the standard notation $e(z) :=e^{2 \pi i z}$.  
In this paper, we study ``shifted convolution sums'', i.e. sums of the form \[ \sum_{n} \lambda_f(n)\lambda_f(n+h).\]  These sums have many applications in analytic number theory.  They often arise in subconvexity results for $GL_1$ and $GL_2$ $L$-functions, see Sarnak \cite{SarnakSCS}, or Lecture 4 of Michel in \cite{MichelLfcnsinfamilies}, and moreover, they are crucial to Soundararajan and Holowinsky's resolution of the quantum unique ergodicity conjecture for classical holomorphic modular forms, see \cite{QUE}.  In this paper we obtain asymptotic estimates for shifted convolution sums after averaging over many shifts $h$, i.e. we study sums of the form \begin{equation}\label{wavy}\sum_{h\asymp Y} \sum_{n \asymp X} \lambda_f(n) \lambda_f(n+h),\end{equation} where the notation $n \asymp X$ indicates a sum over $n$ of length $X$ with a for-now-unspecified smoothing.

We show that when $X$ and $Y$ grow large in such a manner that $Y^2/X \rightarrow \infty$, the double sum \eqref{wavy} has an asymptotic formula with well controlled error terms, and we obtain nontrivial asymptotic upper bounds when $Y^2/X \rightarrow 0$.  However, the most interesting case of our result is the transition phase between these two situations, that is, when $X$ and $Y$ go to infinity and $Y^2/X = c$, a fixed constant.  In this situation, the asymptotic we prove depends delicately on the constant $c$.  One perspective is to interpret the sum \eqref{wavy} as varying on the open first quadrant of the $(X,Y^2)$-plane.  Asymptotic estimates as $X$ and $Y^2$ go to infinity are a description of the singularity at infinity in this quarter-plane.  In this paper we find that the asymptotic behavior varies continuously on a blowup of the point at infinity in the quarter-$(X,Y^2)$-plane.  Our results for shifted convolution sums are very similar to the interesting results of Conrey, Farmer and Soundararajan on transition mean values of the Jacobi symbol \cite{nondifferentiable}.  They study the sum \[S(X,Y) := \sum_{\substack{m \leq X \\ m \text{ odd}}} \sum_{\substack{n \leq Y \\ n \text{ odd}}}\left(\frac{m}{n}\right),\] and similarly find asymptotic formulae when one of either $X$ or $Y$ grow much faster than the other, and a transition region when $X/Y $ is a fixed constant, in which the asymptotic varies continuously on a blowup.  The sums \eqref{wavy} and $S(X,Y)$ at first look dissimilar.  Note however that the problem of determining the asymptotic behavior of \eqref{wavy} is equivalent to determining asymptotic behavior in the sum \[\sum_{n \asymp X}\left(\sum_{m \asymp Y} \lambda_f(n+m)\right)^2,\] which puts \eqref{wavy} and $S(X,Y)$ on equal footing, and makes apparent why the transition region is at $Y^2/X = c$.  Moreover, we will see later that these two examples rest on the same ideas and have many common features.  First, however, we state precisely a corollary of the Main Theorem (found in section 4) of this paper.  The Main Theorem is very similar to the corollary, but allows a choice of cut-off functions.

We define the function $c_f$ on the positive real numbers \[ c_f(\alpha) := \frac{\pi^\frac{3}{2}}{2}  \alpha \sum_{n \geq 1} \lambda_f(n)^2 W_k\left(\pi^2 n \alpha \right), \] with \[W_k(x) :=  \frac{1}{2 \pi i}\int_{(1+a)} \frac{\Gamma(s+k-1) \Gamma\left(s-\frac{1}{2}\right)}{\Gamma(2-s)}x^{-s}  \,ds\] for any fixed $a>0$.  
\begin{cor}
If $1 \leq Y \leq X$ then
\[\sum_{h \leq Y} \sum_{n \geq 1} \lambda_f(n) \lambda_f(n+h) \left(\frac{n(n+h)}{X^2}\right)^{\frac{k-1}{2}} e^{-\frac{n+h}{X}} =\left(c_f\left( \frac{Y^2}{X} \right)-  \frac{\Gamma(k)L(1,\sym^2 f)}{2 \zeta(2)}\right) X +O_k\left(X^{\frac{1}{2}}Y^{\frac{1}{3}(1+\theta)}\right)\] where $c_f(\alpha)$ defined above is a smooth function on the positive real numbers which \begin{itemize} \item as $\alpha \rightarrow \infty$ decays faster than any polynomial \item as  $\alpha \rightarrow 0$ is
\[ = \frac{\Gamma(k)L(1,\sym^2 f)}{2\zeta(2)} + E_k(\alpha),\] where $E_k(\alpha) = o_k\left(\alpha^{\frac{1}{2}}\right)$ unconditionally, and $E_k(\alpha) = O_{k,\varepsilon}\left(\alpha^{\frac{3}{4}-\varepsilon}\right)$ for any $\varepsilon>0$ assuming the Riemann hypothesis for the classical Riemann zeta function. \end{itemize} and where $\theta = 0$ or $= \frac{7}{64}$ depending on whether one assumes the generalized Ramanujan conjecture for Maass forms of $SL_2(\Z)$ or not.
\end{cor}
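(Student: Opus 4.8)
\emph{Strategy.} The plan is to reduce the double sum to integrals of $f$ near the real axis, evaluate those by the transformation law of $f$, and extract main terms by shifting contours. Write $a_f(n):=n^{(k-1)/2}\lambda_f(n)$, so that the coefficients in the corollary are $a_f(n)a_f(n+h)X^{1-k}e^{-(n+h)/X}$; reindexing by $m=n+h$ turns the double sum into $X^{1-k}\sum_m a_f(m)e^{-m/X}S_Y(m)$, where $S_Y(m)=\sum_{m-Y\le n<m}a_f(n)$ is a sum of Hecke eigenvalues over a window of length $Y$. (Equivalently, as remarked above, the quantity is a smoothed form of $\sum_n\bigl(\sum_{m\le Y}a_f(n+m)\bigr)^2$, which already shows why the transition lives at $Y^2\asymp X$.) I would detect the constraint $m-n=h$ by the circle method --- opening $f$ into its Fourier series and invoking the transformation law of $f$ under $SL_2(\Z)$, i.e.\ Voronoi summation, near each rational $a/q$ --- so that, after also representing $e^{-m/X}$ by its Mellin transform $\Gamma(s)$, the problem becomes the evaluation of integrals of $f$ at height $\asymp 1/X$ weighted by the truncated Dirichlet kernel $\sum_{h\le Y}e(-h\alpha)$ of width $\asymp 1/Y$. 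The weight-$k$ Voronoi transform is precisely where the gamma factors $\Gamma(s+k-1)$, $\Gamma(s-\tfrac12)$, $\Gamma(2-s)$ appearing in $W_k$ come from, and it is the interplay of the scales $1/X$, $1/Y$ and the self-dual scale $1/\sqrt X$ of a modular form that produces the transition.

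\emph{Main terms.} Dualising $S_Y(m)$ by Voronoi, and then Voronoi once more against the $m$-sum, produces a double sum $\sum_{\ell_1,\ell_2}\lambda_f(\ell_1)\lambda_f(\ell_2)(\cdots)e\bigl(2\sqrt{m\ell_1}-2\sqrt{m\ell_2}\bigr)$, whose main term is a sum of two diagonal contributions. The diagonal $\ell_1=\ell_2$ of the doubly dualised sum yields $\sum_\ell\lambda_f(\ell)^2\,\Phi(\ell Y^2/X)$ for an explicit weight $\Phi$; since $\sum_\ell\lambda_f(\ell)^2\ell^{-s}=\zeta(s)L(s,\sym^2 f)/\zeta(2s)$, unwinding the Mellin transforms identifies this with $c_f(Y^2/X)\,X$ exactly --- the definitions of $W_k$ and $c_f$ being chosen precisely so as to package the gamma factors from the two Voronoi steps together with this Rankin--Selberg Dirichlet series. (From the automorphic viewpoint this main term is the contribution of the Eisenstein continuous spectrum to the spectral expansion of $|f|^2y^k$, equivalently a polar term of the attached multiple Dirichlet series, which is the picture connecting the result to \cite{nondifferentiable}.) The ``honest'' diagonal of the original sum --- pairs $m,n$ in the same window, governed by the Rankin--Selberg convolution of $f$ with itself --- contributes $-\frac{\Gamma(k)L(1,\sym^2 f)}{2\zeta(2)}X$, the sign falling out of the Voronoi computation; as a check, interchanging the $n$-sum and $s$-integral in the definition of $c_f$ gives
\[c_f(\alpha)=\frac{\pi^{3/2}}{2}\cdot\frac{1}{2\pi i}\int_{(1+a)}\frac{\Gamma(s+k-1)\Gamma(s-\tfrac12)}{\Gamma(2-s)}\pi^{-2s}\frac{\zeta(s)L(s,\sym^2 f)}{\zeta(2s)}\alpha^{1-s}\,ds,\]
whose residue at the pole $s=1$ of $\zeta(s)$ is exactly $\frac{\Gamma(k)L(1,\sym^2 f)}{2\zeta(2)}$, so that $c_f(\alpha)\to\frac{\Gamma(k)L(1,\sym^2 f)}{2\zeta(2)}$ as $\alpha\to0$, the two main terms cancel to leading order when $Y^2/X\to0$, and only the negative term survives when $Y^2/X\to\infty$. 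The asserted properties of $c_f$ are now read off from this formula: moving the $s$-contour far to the right, $x^{-s}$ beats the gamma factors once $\real s\asymp x^{1/3}$, so $W_k(x)$ --- hence $c_f(\alpha)$ --- decays faster than any polynomial as $\alpha\to\infty$; moving it left, the $s=1$ pole gives the constant above, the pole of $\Gamma(s-\tfrac12)$ at $s=\tfrac12$ is cancelled by the zero of $1/\zeta(2s)$ there, and pushing just past $\real s=\tfrac12$ via the classical zero-free region of $\zeta$ yields $E_k(\alpha)=o_k(\alpha^{1/2})$, while on the Riemann hypothesis the zeros of $\zeta(2s)$ all sit on $\real s=\tfrac14$, so the contour reaches $\real s=\tfrac14+\varepsilon$ and $E_k(\alpha)=O_{k,\varepsilon}(\alpha^{3/4-\varepsilon})$.

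\emph{Error term and the main obstacle.} It remains to bound the off-diagonal $\ell_1\ne\ell_2$ terms, together with the errors from truncating the Mellin integrals and from passing between the smooth cut-off of the Main Theorem and the sharp cut-off $h\le Y$, by $O_k(X^{1/2}Y^{(1+\theta)/3})$. The exponent $\tfrac13$ is the classical one governing short sums $\sum_{n\le Y}\lambda_f(n)\ll_\varepsilon Y^{1/3+\varepsilon}$ via Voronoi plus a van der Corput / stationary-phase estimate; the delicate point is to carry the two oscillatory factors $e(2\sqrt{m\ell_i})$ through simultaneously, keeping the dependence on $X$ no worse than $X^{1/2}$ and uniform in the shift, so that the transition term $c_f(Y^2/X)X$ is isolated with a genuine power saving. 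The one essential arithmetic input beyond this is cancellation in the off-diagonal sums, which after dualisation are sums of Kloosterman sums $S(\ell_1,\ell_2;c)$; to beat Weyl's bound one feeds these into the Kuznetsov trace formula, and the Fourier coefficients of the Maass cusp forms of $SL_2(\Z)$ that then appear are controlled only by the Kim--Sarnak bound $\theta=7/64$ toward Ramanujan --- this, and nothing else, accounts for the conditional improvement to $\theta=0$. I expect this off-diagonal analysis to be the main obstacle: one must estimate the shifted, Kloosterman-weighted sums produced by the dualisation with enough uniformity in $X$, $Y$ and the shift $h$ to separate the transition main term $c_f(Y^2/X)X$ with a power-saving error term.
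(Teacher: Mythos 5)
Your route---delta/circle method, double Voronoi in both variables, Kuznetsov for the resulting Kloosterman sums---is genuinely different from the paper's, which never sees a Kloosterman sum: there the smoothed sum is written as $\langle F\,P_h(\cdot|\psi),F\rangle$ for an incomplete Poincar\'e series, $P_h$ is expanded spectrally, the Eisenstein (continuous-spectrum) contribution is computed exactly (the $h$-average is performed on the divisor-type sum $\sum_{ab=h}(a/b)^{s-1/2}$ by a Van der Corput asymptotic, the Bessel integrals are evaluated in closed form as ${}_1F_1$'s, and the functional equation of $L(s,f\times f)$ produces $c_f$), while the cuspidal contribution is bounded via Watson's formula together with the Hafner--Ivi\'c bound $\sum_{h\le Y}\lambda_{u_j}(h)\ll_{u_j} Y^{\frac13(1+\theta)}$, which is the sole source of both the error exponent and of $\theta$. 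The part of your argument that is actually carried out---the analysis of $c_f$ from its Mellin representation (residue of $\zeta(s)$ at $s=1$ giving $\frac{\Gamma(k)L(1,\sym^2 f)}{2\zeta(2)}$, cancellation of the pole of $\Gamma(s-\frac12)$ against the zero of $1/\zeta(2s)$ at $s=\frac12$, the zero-free region giving $o_k(\alpha^{1/2})$ and RH giving $O_{k,\varepsilon}(\alpha^{3/4-\varepsilon})$, and the rightward shift for superpolynomial decay)---is correct and reproduces Lemma 2 of the paper.

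The gap is that everything which actually produces the asymptotic formula is asserted rather than proved. First, the identification of the diagonal $\ell_1=\ell_2$ of the doubly dualised sum with $c_f(Y^2/X)\,X$, and of the ``honest diagonal'' with $-\frac{\Gamma(k)L(1,\sym^2 f)}{2\zeta(2)}X$, is a claim about how the gamma factors of two Voronoi transforms assemble into $W_k$; you do not carry out this computation, and it is precisely where the paper does its hardest work (the hypergeometric evaluation of $\int_0^Y u^{s-\frac12}e^{-2\pi yu}K_{s-\frac12}(2\pi yu)\,du$ and the functional equation step in Lemma 1). Second, and more seriously, you explicitly defer the off-diagonal bound---shifted, oscillatory sums of Kloosterman sums, uniform in $X$, $Y$ and the shift---calling it ``the main obstacle.'' Until that is done there is no proof of the error term $O_k(X^{1/2}Y^{\frac13(1+\theta)})$, and it is not clear a Kuznetsov treatment would even reproduce that exponent: in the paper, $\frac13(1+\theta)$ arises from a short-sum bound for Hecke eigenvalues of Maass forms \emph{in the shift variable} $h$, applied after the spectral decomposition, whereas in your setup the $h$-sum has already been absorbed into a Dirichlet kernel before dualisation and the Maass forms enter through their coefficients at the dual variables $\ell_1,\ell_2$. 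As it stands the proposal is a plausible program with the correct target, not a proof.
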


As a benchmark, the best point-wise estimates for shifted convolution sums on $SL_2(\Z)$ give \begin{equation}\label{SCP} \sum_{n \asymp X} \lambda_f(n) \lambda_f(n+h) \ll_\varepsilon X^{\frac{1}{2}+\varepsilon},\end{equation} see Sarnak \cite{SarnakSCS}.  In Corollary 1, observe that if $Y^2$ is large compared to $X$ we have that \[\sum_{h \leq Y} \sum_{n \geq 1} \lambda_f(n) \lambda_f(n+h) \left(\frac{n(n+h)}{X^2}\right)^{\frac{k-1}{2}} e^{-\frac{n+h}{X}} \sim -  \frac{\Gamma(k)L(1,\sym^2 f)}{2 \zeta(2)} X \] and if $Y^2$ is small compared to $X$ then  \[ \sum_{h \leq Y} \sum_{n \geq 1} \lambda_f(n) \lambda_f(n+h) \left(\frac{n(n+h)}{X^2}\right)^{\frac{k-1}{2}} e^{-\frac{n+h}{X}} = o_k\left( X^{\frac{1}{2}}Y\right) + O_k\left(X^\frac{1}{2}Y^{\frac{1}{3}(1+\theta)}\right),\] or an even better bound if we assume the Riemann Hypothesis.  In the transition region when $Y^2$ is a constant multiple of $X$, the asymptotic growth is controlled by the function $c_f(\alpha)$.  

Now we describe the work of Conrey, Farmer and Soundararajan.  Their result is \begin{thmnn}[Conrey, Farmer and Soundararajan] Uniformly for all large $X$ and $Y$, we have \[S(X,Y) = \frac{2}{\pi^2} C\left( \frac{Y}{X}\right) X^{\frac{3}{2}} + O\left((XY^{\frac{7}{16}}+YX^{\frac{7}{16}})\log XY\right),\] where for $\alpha \geq 0$ we define \[C(\alpha) = \sqrt{\alpha} + \frac{1}{2 \pi} \sum_{k=1}^\infty \frac{1}{k^2} \int_0^\alpha \sqrt{y} \left(1-\cos \left(\frac{2 \pi k^2}{y}\right) + \sin \left(\frac{2 \pi k^2}{y}\right)\right)\,dy.\] An alternate expression for $C(\alpha)$ is \[C(\alpha) = \alpha+ \alpha^\frac{3}{2} \frac{2}{\pi} \sum_{k=1}^\infty \frac{1}{k^2} \int_0^{\frac{1}{\alpha}} \sqrt{y} \sin \left(\frac{\pi k^2}{2y}\right)\,dy.\]\end{thmnn}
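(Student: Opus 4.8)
\noindent The plan is to prove the asymptotic by smoothing the two sharp cutoffs, applying Poisson summation in each of the two variables separately, and evaluating the resulting Gauss sums explicitly. Both ``algebraic'' main terms and the transition series inside $C(\alpha)$ then emerge from the interaction of two functional equations — one for the character sum over $n$, one for the character sum over $m$. First I would replace $m\le X$ and $n\le Y$ by smooth weights $\Psi(m/X),\Phi(n/Y)$ with a transition scale $\delta$ to be chosen at the end; by P\'olya--Vinogradov the cost of desmoothing is $\ll\delta\sqrt{XY}\log XY$ (plus the error from the smooth sum itself), so it suffices to analyse $\widetilde S(X,Y)=\sum_{m,n\text{ odd}}\left(\frac mn\right)\Psi(m/X)\Phi(n/Y)$.

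Second, fix $m$ odd. The function $n\mapsto\left(\frac mn\right)\mathbf 1[n\text{ odd}]$ is periodic of period $4m$, so Poisson summation in $n$ gives
\[\sum_{n\text{ odd}}\left(\tfrac mn\right)\Phi(n/Y)=\frac{Y}{4m}\sum_{k\in\Z}\widehat\Phi\!\left(\frac{kY}{4m}\right)g_m(k),\qquad g_m(k)=\sum_{\substack{a\bmod 4m\\ a\text{ odd}}}\left(\tfrac ma\right)e\!\left(\frac{ka}{4m}\right).\]
The Gauss sum $g_m(k)$ factors by the Chinese Remainder Theorem; writing $m=m_1m_2^2$ with $m_1$ squarefree, it vanishes unless $m_2^2\mid k$ (together with mild $2$-adic conditions), and otherwise has modulus $\asymp\sqrt m$ with a unit-modulus oscillatory factor determined by the classical evaluation of quadratic Gauss sums. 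The frequency $k=0$ contributes $\tfrac{Y}{4m}\widehat\Phi(0)g_m(0)$, and $g_m(0)=\sum_{a\bmod 4m,\ a\text{ odd}}\left(\frac ma\right)$ is nonzero precisely when $\left(\frac m\cdot\right)$ is principal on odd residues, i.e.\ when $m$ is a perfect square; summing over $m=\ell^2\le X$ gives a first main term of size $\asymp Y\sqrt X=\alpha X^{3/2}$.

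Third, in the surviving frequencies $k\ne0$ substitute the evaluated Gauss sum and carry out the remaining sum over $m\le X$. After imposing $m_2^2\mid k$ and summing off the square part $m_2$ (which produces the weight $\sum_{m_2}m_2^{-2}$, hence eventually the factor $\sum_{k\ge1}k^{-2}$), one is left with an additive character in the squarefree variable, and a \emph{second} Poisson summation, now with continuous parameter $X$, plus a second Gauss-sum evaluation collapses the double sum. The zero frequency of this second step forces $n$ to be a square and yields the second main term, of size $\asymp X\sqrt Y=\sqrt\alpha\,X^{3/2}$ (so the two main terms match the two explicit terms in the two displayed forms of $C$). The genuinely two-fold oscillatory remainder combines the two quadratic phases with the two transforms $\widehat\Phi,\widehat\Psi$ into a single archimedean integral; the four sign possibilities coming from the two functional equations (and the genus-theoretic root number) are exactly what build the kernel $1-\cos(2\pi k^2/y)+\sin(2\pi k^2/y)$, while the constraint ``$k^2\ll Y^2/X$'' becomes the range $\int_0^\alpha\sqrt y\,(\cdots)\,dy$; this is the first displayed form of $C(\alpha)$, and $y\mapsto1/y$ plus a reorganization gives the second. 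For the error, the tails of both Poisson sums decay faster than any power for smooth $\Phi,\Psi$, so the only genuine losses are the desmoothing error $\delta\sqrt{XY}\log XY$ and the ``balanced'' range $m\asymp n$ where neither Poisson step shortens its dual sum; bounding the latter by Weil's bound for the resulting incomplete Kloosterman/Gauss sums and optimising over $\delta$ produces the uniform error $(XY^{7/16}+YX^{7/16})\log XY$.

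I expect the main obstacle to be the third step: carrying out the second Poisson summation uniformly in $X$ and $Y$ while controlling all four sign cases and the $2$-adic ramification, and then recognising the resulting oscillatory sum-and-integral \emph{exactly} as $C(\alpha)$ — that the arithmetic factor is precisely $k^{-2}$ and the archimedean factor precisely $\int_0^\alpha\sqrt y\,(1-\cos+\sin)\,dy$ requires the Gauss-sum bookkeeping to close perfectly. A secondary difficulty is obtaining the error term uniformly across the whole quadrant — both degenerate regimes and the transition — from one argument, which is what forces $\delta$ to depend on both $X$ and $Y$. (Alternatively one could package the computation into the double Dirichlet series $\sum_{m,n}\left(\frac mn\right)m^{-s}n^{-w}$, whose meromorphic continuation and group of functional equations reproduce the main terms as residues and $C(\alpha)$ from a double Mellin--Barnes contour shift; this is the multiple-Dirichlet-series perspective alluded to in the introduction.)
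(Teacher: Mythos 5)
First, a point of orientation: this theorem is not proved in the present paper at all. It is the result of Conrey, Farmer and Soundararajan, quoted from \cite{nondifferentiable} as motivation and for comparison; the paper only describes its mechanism in one sentence (``based on the Poisson summation formula, see section 3 of their paper'') and, in Section 1, reinterprets it through the multiple Dirichlet series $Z(s,w)$ and Goldfeld--Hoffstein's weight $\frac{1}{2}$ Eisenstein series. So there is no in-paper proof to compare yours against; I can only measure your outline against the cited argument and against the paper's own description of it. On that score your overall strategy is the right one and agrees with what the paper says about the CFS proof: apply Poisson summation to one of the two character sums, evaluate the resulting quadratic Gauss sums, read off a main term from the zero frequency (equivalently from the perfect-square moduli), and recognize the transition function $C(\alpha)$ in the oscillatory dual sum, with the square values of the dual frequency supplying the $\sum_k k^{-2}$ and the phases $2\pi k^2/y$. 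Your identification of the two ``algebraic'' main terms with $\sqrt{\alpha}\,X^{3/2}=X\sqrt{Y}$ and $\alpha X^{3/2}=Y\sqrt{X}$, and with the two degenerate regimes, is correct and consistent with the elementary estimates recorded in Section 1.

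As a proof, however, the proposal has a genuine gap exactly where you flag one. Everything that makes the theorem nontrivial --- that the arithmetic weight is exactly $k^{-2}$, that the archimedean kernel is exactly $1-\cos(2\pi k^2/y)+\sin(2\pi k^2/y)$ with the constant $\frac{1}{2\pi}$, and that the error is $O((XY^{7/16}+YX^{7/16})\log XY)$ uniformly over the whole quadrant --- is asserted to ``close perfectly'' rather than derived. Two specific concerns. (i) Your ``second Poisson summation with continuous parameter $X$'' is not a well-defined operation as stated; in the cited argument the second main term and the transition series come from isolating the perfect-square values of the single dual frequency, with non-square frequencies thrown into the error by character-sum estimates (the exponent $7/16$ is the signature of a Burgess-type bound, not of Weil's bound plus an optimization over a smoothing parameter as you speculate), so the architecture you propose is not obviously the one that produces the stated error term. (ii) The Gauss-sum evaluation for non-squarefree $m=m_1m_2^2$ is more delicate than ``modulus $\asymp\sqrt{m}$ with a unit oscillatory factor'': the character is imprimitive, and its Gauss sums at general frequencies carry divisor-type factors and vanishing conditions that must be tracked exactly for the clean $\sum_k k^{-2}$ to emerge. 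None of this is fatal to the strategy, but as written the proposal is a plausible road map rather than a proof: the destination (the precise formula for $C(\alpha)$ and the exponent $7/16$) is reached only by assertion.
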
   From the first expression, one finds that after integrating by parts that \[C(\alpha) = \sqrt{\alpha} + \frac{\pi}{18}\alpha^{\frac{3}{2}}+O\left(\alpha^{\frac{5}{2}}\right)\] as $\alpha \rightarrow 0$.  The second expression gives the limiting behavior \[C(\alpha) = \alpha+ O\left(\alpha^{-1}\right)\] as $\alpha \rightarrow \infty$.  

We make some brief remarks on notation: in this paper we use an integral with a subscript in parentheses $\int_{(c)}$ to denote the contour integral in the complex plane with positive orientation along the line $\real(s) = c$, and use the standard notations $\ll_x $, $o_x(\cdot)$ and $O_x(\cdot)$, where the subscript denotes that the implied constants depend on the parameter $x$ and are otherwise absolute.  We have made an effort to be explicit about the dependence of our error terms on the various parameters involved in this paper.  

Finally, we would like to thank the number theory community at Stanford for the stimulating academic environment, in particular, Robert Rhoades, Xiannan Li, Bob Hough, David Sher, Akshay Venkatesh and above all, Kannan Soundararajan, discussions with whom influenced the final form of this paper.  

\section{Connections between the results via Eisenstein and Multiple Dirichlet Series}

There are many parallels between our work and that of Conrey, Farmer and Soundararajan, and moreover both results can be interpreted as averages of Fourier coefficients of Eisenstein series.  We first mention some interesting similarities.

In both our work and that of Conrey, Farmer and Soundararajan, the result when one parameter grows more rapidly than the other can be deduced by evaluating the long sum first and then making trivial estimates on the short sum.  Indeed, if $Y^2$ is very small compared to $X$, Sarnak's solution to the shifted convolution problem \eqref{SCP} yields \[\sum_{h \leq Y}\sum_{n \leq X} \lambda_f(n) \lambda_f(n+h) \ll_{k,\varepsilon} X^{\frac{1}{2}+\varepsilon}Y.\] In the case of the Jacobi symbol, one can use the Polya-Vinogradov inequality to show that \[\sum_{\substack{n \leq X \\ n \text{ odd}}} \left(\frac{n}{m}\right) = \begin{cases} \frac{X}{2} \frac{\varphi(m)}{m} + O_\varepsilon\left(X^\varepsilon\right) & \text{if } m = \square \\ O\left(m^\frac{1}{2}\log m \right) & \text{if } m \neq \square \end{cases} \] from which it follows that \begin{eqnarray*} S(X,Y) & = & \sum_{\substack{m \leq Y\\ m = \text{odd } \square}} \left( \frac{X}{2} \frac{\varphi(m)}{m}  + O_\varepsilon\left(X^\varepsilon\right)  \right) + O\left(Y^{\frac{3}{2}}\log Y \right) \\ & = & \frac{2}{\pi^2} XY^\frac{1}{2} + O_\varepsilon \left(Y^{\frac{3}{2}}\log Y + Y^\frac{1}{2}X^\varepsilon + X \log Y\right),\end{eqnarray*} and similarly if the roles of $n,m$ are reversed, see \cite{nondifferentiable}.  The case of shifted convolution sums when $Y^2$ is much larger than $X$ follows even more simply.  Indeed, we have that \[\sum_{n \leq X} \lambda_f(n) \ll_f X^{\frac{1}{3}}.\] See for example, \cite{HafnerIvic}.  Then we find 
\[\sum_{h \leq X} \sum_{n\leq X-h} \lambda_f(n) \lambda_f(n+h) = -\frac{L(1,\sym^2 f)}{2\zeta(2)}X +  O_f\left(X^{\frac{2}{3}}\right)\] by squaring and considering the off-diagonal terms.  Observe that these easily-derived estimates correspond to the limiting behavior of the transition regions in either of the more general theorems stated above.  

At the transition phase (i.e. on the blowup) the asymptotic behavior in both theorems depends crucially on the automorphic nature of the modular form $f$ or the Jacobi symbol.  Conrey, Farmer and Soundararajan's proof is based on the Poisson summation formula, see section 3 of their paper \cite{nondifferentiable}, which is the essential ingredient in proving the functional equations of the Dirichlet $L$-functions $L(s,\chi)$, i.e. $GL_1$ automorphy.  Likewise, one might consider Lemma 1 of section 3 of this paper to be the essential step which brings out the behavior of shifted convolution sums on the blowup.  The crucial step in the proof of Lemma 1 is the application of the functional equation of the $L$-function $L(s,f \times f)$, which follows from the automorphy of $f$.  

The Theorem proved by Conrey, Farmer and Soundararajan has one very surprising feature which is not apparent in our work.  The function $C(\alpha)$ appearing in their result is once continuously differentiable everywhere, but it is twice differentiable at $\alpha \Q $ if and only if $\alpha = 2 p/q$ with $p$ and $q$ both odd, see section 6 of their paper.  It is necessary that the sum $S(X,Y)$ has a sharp cut-off for $C(\alpha)$ to have such strange differentiability properties.  In our work, we do not prove an asymptotic result for a sharp cut-off, but we do have some flexibility in our choice of cut off functions.   In section 3, we see that the asymptotic we obtain on the blowup has a main term which depends on the particular shape of the cut-off function we choose.  This is unusual in analytic number theory.  It remains possible that averages of shifted convolution sums in the shift aspect when considered with a sharp cut-off also have strange differentiability properties at the transition region.  

The most essential connection, however, is that both theorems arise from averages of Fourier coefficients of Eisenstein series, and the transition regions can be understood by studying the resulting multiple Dirichlet series.  We first discuss the case of characters.  We have that \[S(X,Y) =  \frac{1}{(2 \pi i)^2} \int_{(c)}\int_{(c)} Z(s,w) \frac{X^s}{s}\frac{Y^w}{w} \,ds\,dw\] where $c>1$ and \[Z(s,w) := \sum_{\substack{m,n \geq 1 \\ m,n \text{ odd}}} \frac{\left(\frac{m}{n}\right)}{n^s m^w}.\] This is perhaps the first example of a multiple Dirichlet series, and information about the analytic properties of $Z(s,w)$ would determine the asymptotic behavior of $S(X,Y)$.  Goldfeld and Hoffstein in \cite{GHMDS} derive the analytic properties of $Z(s,w)$, which crucially follow from studying weight $\frac{1}{2}$ Eisenstein series for the congruence subgroup $\Gamma_0(4)$.  More precisely, they study the modified multiple Dirichlet series \[Z_+(s,w) = \sum_{\substack{ m \geq 1 \\ m \text{ squarefree}}} \frac{L(s,\chi_m)}{ m^{w}},\] where \[\chi_m(n) := \begin{cases}  \left(\frac{m}{n}\right) & m \equiv 1 \pmod 4 \\ \left(\frac{4m}{n}\right) & m \equiv 2,3 \pmod 4 \end{cases} \] and find that it has poles along $w=1$ and $w=\frac{3}{2}-s$.  In fact, if $E_{\frac{1}{2}}(z,s)$ is the weight $\frac{1}{2}$ Eisenstein series at the cusp 0 (see Goldfeld and Hoffstein for details), it has the Fourier expansion \[E_{\frac{1}{2}}(z,s) =  \sum_{m \geq 1} a_m(s,y)e(nx)\] where \[a_m(s,y) = \frac{L(2s,\chi_m)(1-\chi_m(2)2^{-2s})}{\zeta(4s)(1-2^{-4s})} \frac{y^s}{4^s}  K_m(s,y)\] is essentially $L(2s,\chi_m)$ times a $K$-Bessel function.  One finds that \[ \sum_{\substack{ m \leq Y \\ m \text{ odd}}} \frac{1}{2 \pi i} \int_{(\frac{1}{2})} a_m\left(s, 1/X\right)\,ds\] evaluates to a double mellin inverse of $Z(s,w)$ with some additional factors.  Ignoring covergence issues, or after sufficient smoothing, one can see how the transition region arises directly from $Z(s,w)$.  For simplicity, we work with $Z_+(s,w)$.  We have \begin{eqnarray*} \sum_{\substack{m \leq Y \\ m \text{ squarefree} }} \sum_{n \leq X} \chi_m(n) & = &  \frac{1}{(2\pi i )^2} \int_{(c)}\int_{(c)} Z_+(s,w) \frac{X^s}{s}\frac{Y^w}{w}\,ds\,dw \\ & = & \frac{1}{2\pi i} \int_{(c)} c(w) X\frac{Y^w}{w} + c_+^*(w)  \frac{X^\frac{3}{2} }{(\frac{3}{2}-w)w} \left(\frac{Y}{X}\right)^w \,dw \\
& & + \frac{1}{(2\pi i)^2}\int_{(c)} \int_{(\frac{3}{2} - \real(w) - \varepsilon)} Z_+(s,w)\frac{X^s}{s} \frac{Y^w}{w}\,ds \,dw \end{eqnarray*} where $c(w)$ and $c_+^*(w)$ are the meromorphic functions arising in Theorem 1 of \cite{GHMDS}.  They have poles at $w=\frac{1}{2}$, with $\res_{w=\frac{1}{2}} c(w) = - \res_{w=\frac{1}{2}} c_+^*(w)$ because of the intersection of the two singular divisors at $(s,w) = (\frac{1}{2},1)$.  Hence, if $Y/X\rightarrow \infty$, the terms involving $c(w)$ and $c_+^*(w)$ cancel, and if $Y/X \rightarrow 0$, the term involving $c(w)$ becomes a main term, and the one involving $c_+^*(w)$ becomes an error term of size $O(Y^\frac{3}{2})$.  If $Y/X$ remains constant, the term involving $c_+^*(w)$ gives the behavior on the blow-up.  

Our result on the following pages can be seen to arise from an identical situation.  Sums of the form \eqref{wavy} are essentially an average of Fourier coefficients of $|f|^2$.  Our method to treat these sums is to take a spectral expansion into Eisenstein series and Maass cusp forms, so one is led to compute a sum similar to \[\sum_{m \leq Y} \frac{1}{2 \pi i} \int_{(\frac{1}{2})} b_m\left(s,1/X \right)\,ds,\] where \[E(z,s) := \sum_{\gamma \in \Gamma_\infty \backslash \Gamma} \imag(\gamma z)^s = \sum_n b_n(s,y) e(nx) \] is the standard weight 0 real analytic Eisenstein series.  The integral along $\real(s) = \frac{1}{2}$ here arises from averaging over the continuous spectrum in our decomposition.  In this context, the analouge of $Z(s,w)$ is \[ \sum_h \sum_{ab=h} \left( \frac{a}{b}\right)^{s-\frac{1}{2}} \frac{1}{h^w} = \zeta\left(w-s+\frac{1}{2}\right)\zeta\left(w+s-\frac{1}{2}\right).\]  While not a multiple Dirichlet series, it is still a zeta function in two variables and has singular divisors at $w=s+\frac{1}{2}$ and $w= \frac{3}{2}-s$, the intersection of which gives rise to a transition behavior exactly as discussed above.  This will be carried out in explicit detail in the remainder of the paper.

Finally, one should note the connection between Riemann's nondifferentiable function and theta functions on the real axis, see the interesting paper of Duistermaat \cite{RiemannNondiffereniable}.

\section{Preliminaries}

Our main approach to the Main Theorem is to take the Petersson inner product of $y^k |f|^2$ against an incomplete Poincar\'{e} series $P_h(\cdot|\psi)$.  This approach was first introduced by Selberg \cite{SelbergSCS}, and has been successfully used by many other authors to study shifted convolution sums in the past (for an overview, see \cite{MichelLfcnsinfamilies}).  Throughout this paper we set $\Gamma = SL_2(\Z)$, and let $\mathcal{H}$ denote the upper half plane with its hyperbolic metric.  We work in the Hilbert space $L^2(\Gamma \backslash \mathcal{H})$ of square integrable measurable functions with the Petersson inner product \[\langle u,v\rangle = \int_{\Gamma \backslash  \mathcal{H}} u(z) \overline{v(z)}\,d_\mu z.\]  The symmetric operator \[\Delta = -y^2 \left( \frac{\partial^2}{\partial x^2}+ \frac{\partial^2}{\partial y^2} \right) \] acts on the subspace of smooth functions and moreover has a unique self-adjoint extension to all $L^2(\Gamma \backslash \mathcal{H})$, see Iwaniec \cite{SpecIw} chapter 4.  Given a classical holomorphic normalized cuspidal eigenform $f$ of weight $k$, set $F(z) := y^{\frac{k}{2}}f(z).$  We have that $|F| \in L^2(\Gamma \backslash \mathcal{H}),$ but on the other hand it is no longer holomorphic.  Let $\psi(y)$ be an infinitely differentiable compactly supported function on $\R_{>0}$, and let $\Gamma_\infty$ denote the stabilizer in $\Gamma$ of the cusp at infinity.  Then we define the incomplete Poincar\'{e} series \[P_h(z|\psi) := \sum_{\gamma \in \Gamma_\infty \backslash \Gamma} e(h\gamma z) \psi \left(\text{Im}(\gamma z)\right),\] which is a smooth and bounded function on $\Gamma \backslash \mathcal{H}$.  By unfolding the inner product \[\langle F  P_h(\cdot | \psi),F\rangle = \int_{\Gamma \backslash \mathcal{H}} y^k|f(z)|^2 P_h(z|\psi) \,d_\mu z\] on the Poincar\'{e} series, we find that \[ \langle F P_h(\cdot | \psi),F\rangle  =  \sum_{n =1}^\infty \lambda_f(n)\lambda_f(n+h) (n(n+h))^{\frac{k-1}{2}}\int_0^\infty \psi(y) e^{-4 \pi (n+h)y} y^{k-2} \, dy,\] i.e. this inner product is a smoothed shifted convolution sum with cut-off function given in terms of an integral transform (similar to the Laplace transform) of $\psi$.  The behavior of $\psi(y)$ as $y$ tends to 0 is crucial to control the length of the shifted convolution sum.  In connection with the previous section, it should be noted that if $\psi$ were a delta function, then taking the inner product against $P_h(z|\psi)$ is equivalent to taking the $h$-th Fourier coefficient.

Let $u_j$ be a complete orthonormal system of cusp forms which are eigenfunctions of the Laplace operator and all Hecke operators.  Because we are only working in level 1, we need not worry about old forms or the Hecke operators whose index divides the level.  Define the real analytic Eisenstein series by \[E(z,s) := \sum_{\gamma \in \Gamma_\infty \backslash \Gamma } \text{Im}(\gamma z)^s\] for $\real(s) > 1$, and in general by analytic continuation.  For each $s \neq 0,1$, the Eisenstein series are also eigenfunctions for the Laplace operator and all the Hecke operators.  We have then that \[\sum_{j = 1}^\infty \langle P_h(\cdot |\psi), u_j\rangle u_j(z) + \frac{1}{4\pi} \int_{-\infty}^\infty \langle P_h(\cdot|\psi),E(\cdot, 1/2 + it)\rangle E(z, 1/2 + it) \,dt\] converges to $P_h(z|\psi)$ in the norm topology on $L^2(\Gamma \backslash \mathcal{H})$, see Theorems 4.7 and 7.3 in \cite{SpecIw}.  Then we have that \begin{equation}\label{expansion}\langle F P_h(\cdot | \psi),F\rangle =  \sum_{j=1}^\infty \langle P_h(\cdot | \psi), u_j\rangle \langle Fu_j,F\rangle + \frac{1}{4\pi} \int_{-\infty}^\infty \langle P_h(\cdot | \psi) , E(\cdot, 1/2+ it) \rangle \langle F  E(\cdot, 1/2+ it), F \rangle\,dt,\end{equation} and we will see later that the convergence is absolute and uniform in $h$.  


If $u \in L^2\left(\Gamma \backslash \mathcal{H}\right)$ and $\Delta u = \lambda u$ with $\lambda = s(1-s),$ then $u(z)$ has a Fourier expansion given in terms of the $K$-Bessel function $K_\nu(z)$,  which is an exponentially decaying solution to the differential equation \[z^2 f'' +z f' - \left(z^2 + \nu^2\right) f = 0.\]  
We will primarily be interested in the $K$-Bessel function for purely imaginary $\nu$, and note some useful properties of these functions: first, that $K_{\nu}(z)$ is real for real $z$, second, that as $\imag(\nu) \rightarrow \infty$ in a fixed vertical strip, $K_\nu(z)$ is decaying exponentially, and last, that for $\nu = i t$, with $t \in \R$, $K_{it}(z)$ has a branch cut, which we take to be along the negative real axis in the $z$-plane.  As $z\rightarrow 0$, we have that \[|K_{it}(z)| \sim \pi  \left| \frac{\sin(t \log z/2)}{\Gamma(1+it) \sinh(\pi t)}\right|\] so long as one avoids the branch cut.

\section{Proof of Theorem}

We now proceed to the proof of the Main Theorem of the paper by summing the right side of  \eqref{expansion} over $h$.  Pointwise, the largest term comes from the discrete spectrum (see \cite{SarnakSCS}), however, on average, the continuous spectrum dominates.  We start with the continuous spectrum contribution to \eqref{expansion}.

\subsection{Eisenstein Series}

The Eisenstein series $E(z,s)$ has a Fourier expansion given by

\[E(z,s) = \varphi(0,s)+\sum_{n \neq 0} \varphi(n,s) W_{s}(nz), \]
where
\[W_{s}(z) = 2y^{\frac{1}{2}} K_{s-\frac{1}{2}}(2 \pi y) e(x).\] If $n= 0$ and $s \neq \frac{1}{2}$  then \[\varphi(0,s) = y^{s} + \frac{\xi(2s-1)}{\xi(2s)}y^{1-s},\] and if $n \neq 0$, the $n$-th Fourier coefficient of $E(z,s)$ is given by
\[\varphi(n,s) = \xi(2s)^{-1} |n|^{-1/2} \sum_{ab = |n|} \left(\frac{a}{b}\right)^{s-1/2},\] where $\xi(s) = \pi^{-s/2}\Gamma(s/2)\zeta(s)$ denotes the completed Riemann zeta function which has the functional equation $\xi(s) = \xi(1-s)$. If $s=\frac{1}{2}+it$, we find by unfolding the Poincar\'{e} series that the inner product \[\langle P_h(\cdot | \psi), E(\cdot, 1/2+it)\rangle =  \frac{2}{\xi(2s-1)}  \sum_{ab = h }\left(\frac{a}{b}\right)^{s-\frac{1}{2}} \int_0^\infty \frac{\psi(y)}{y^{3/2}}  e^{-2 \pi h y} K_{s-\frac{1}{2}}(2 \pi h y) \,dy \] where we have used that $\overline{\xi(2s)} = \xi(2 \overline{s}) = \xi(2-2s) = \xi(2s-1)$. We can also unfold the second inner product on the Eisenstein series.  Following Iwaniec \cite{ClassIw} chapter 13, set
\[L(s,f\times f) := \zeta(s) L(s,\sym^2 f) = \zeta(2s)L(s,f \otimes f) = \zeta(2s) \sum_{n\geq 1} \frac{\lambda_f(n)^2}{n^s},\] where the last equality is valid only for $\mathop{\rm Re}(s) >1$.  This $L$-function admits the functional equation \[\Lambda(s, f \times f) = \Lambda(1-s, f \times f),\] where \[\Lambda(s, f \times f) = L_\infty(s, f \times f)L(s, f \times f)\] with \[L_\infty(s,f \times f) := (2 \pi)^{-2s} \Gamma(s)\Gamma(s+k-1) .\] By unfolding when $\real(s) >1$, and in general after analytic continuation, 
\[ \langle F E(\cdot, s),F\rangle = \frac{\Lambda(s, f \times f)}{(4 \pi)^{k-1} \xi(2s)}.\]

Going back to the spectral expansion \eqref{expansion} and pulling these two inner products together, we have that the Eisenstein series contribution to $\langle F P_h(\cdot | \psi),F\rangle$ is  
\[E_{f,h}(\psi) :=  \frac{1}{(4 \pi)^{k-1}} \frac{1}{2 \pi i} \int_{(\frac{1}{2})} \frac{\Lambda(s, f \times f)}{\xi(2s)\xi(2s-1)}\sum_{ab = h }\left(\frac{a}{b}\right)^{s-\frac{1}{2}}  \int_0^\infty \frac{\psi(y)}{y^{3/2}}  e^{-2 \pi h y} K_{s-\frac{1}{2}}(2 \pi h y) \,dy \, ds.\]
The completed $L$ and zeta functions have the same number of gamma factors in the numerator and denominator, and Bessel function decays rapidly as $|\imag(s)| \rightarrow \infty$, so the contour integral converges rapidly.  It is interesting to note that the $s$ integral only makes sense because $\xi(s)^{-1}$ has no poles and at most polynomial growth on the $\real(s) =1$ line, i.e. due to the prime number theorem.  Indeed, one sees that the contour is constrained between the poles in the critical strips of the two $\xi(s)^{-1}$ functions appearing here.   Due to the mysterious nature of the residues at these poles, shifting contours seems to be a futile approach to understanding the asymptotic size of $E_{f,h}(\psi)$.

Introducing a sum over $h$ clears this obstruction.  We have to compute
\begin{equation}\label{eisbeginning}\sum_{h \leq Y} E_{f,h}(\psi) = \frac{1}{(4 \pi)^{k-1}} \int_0^\infty \frac{\psi(y)}{y^{\frac{3}{2}}} \frac{1}{2 \pi i} \int_{(\frac{1}{2})} \frac{\Lambda(s, f \times f)}{\xi(2s) \xi(2s-1)} \sum_{h \leq Y} \sum_{ab = h} \left(\frac{a}{b} \right)^{s-\frac{1}{2}} e^{-2 \pi y h} K_{s-\frac{1}{2}}(2 \pi y h)\,ds\,dy,\end{equation} and can evaluate the sum over $h$ by standard techniques.  We have by adapting Theorem 12.4 from \cite{TOTRZF} (due to Van Der Corput) that 
\[\sum_{h \leq x} \sum_{ab = h} \left( \frac{a}{b}\right)^{s-\frac{1}{2}} = \zeta(2s) \frac{x^{s+\frac{1}{2}}}{s+\frac{1}{2}} + \zeta(2-2s) \frac{x^{-s+\frac{3}{2}}}{-s+ \frac{3}{2}} + O_{s,\varepsilon}\left(x^{\frac{27}{82}+|\real(s) -\frac{1}{2}| +\varepsilon}\right),\] where the implied constants depend at most polynomially on $|s|$.   It should be noted that the error term here is not the best currently known, however, it is sufficiently small as to not contribute to the final result of this paper.  By partial summation, \begin{eqnarray*} & & \sum_{h \leq Y} \sum_{ab = h} \left(\frac{a}{b} \right)^{s-\frac{1}{2}} e^{-2 \pi y h} K_{s-\frac{1}{2}}(2 \pi y h) \\ & = & \zeta(2s) \int_0^Y u^{s-\frac{1}{2}} e^{-2\pi y u } K_{s-\frac{1}{2}} (2 \pi y u )\, du  + \zeta(2-2s) \int_0^Y u^{\frac{1}{2}-s} e^{-2\pi y u } K_{s-\frac{1}{2}} (2 \pi y u )\, du\\
& & + \,\,\, O_{s,\varepsilon}\left(e^{-2 \pi y Y}K_{s-\frac{1}{2}}(2 \pi y Y) Y^{\frac{27}{82}+|\real(s) -\frac{1}{2}|  +\varepsilon}\right). \end{eqnarray*} Note that the two integrals appearing in the displayed equation are interchanged under the transformation $s \longleftrightarrow 1-s$, by symmetry of the the Bessel function.  The contour integral over $s$ in $\sum E_{f,h}(\psi)$ is also symmetric under $s \longleftrightarrow 1-s$ so that these two integrals are identical in the overall sum, and we need only work one of them out.  The first integral can be evaluated explicitly, and the answer will be in terms of hypergeometric functions.  The confluent hypergeometric function that will appear below is defined by the power series \[{}_1F_1(a,b,z) = \sum_{n=0}^\infty \frac {a^{(n)} z^n} {b^{(n)} n!} \] where \[a^{(n)}=a(a+1)(a+2)\cdots(a+n-1) = \frac{\Gamma(a+n)}{\Gamma(a)}\] is called either the `rising factorial' or `Pochhammer symbol'. 
The theory of hypergeometric functions is developed in detail in \cite{HTFvol1}.  It is entire on $\C$ separately in each variable except for simple poles at $b=0,-1,-2,\ldots$ by the absolute and uniform convergence of the defining series, hence it is meromorphic on $\C^3$.  We have that the residues at these poles are given by  \[\res_{b=-n} {}_1F_1(a,b,z) = \frac{\Gamma(a+n+1)(-1)^n}{\Gamma(a)\Gamma(n+2)\Gamma(n+1)} z^{n+1} {}_1F_1(a+n+1,n+2,z),\] see Gradshteyn and Ryzhik \cite{GR}, 9.214.  

We have the formulae \[K_\nu(z) = \frac{\pi}{2} \frac{i^{\nu}J_{-\nu}(iz) - i^{-\nu} J_{\nu}(iz)}{\sin \pi \nu},\] \[e^{-z} \frac{\Gamma\left( s+ \frac{1}{2}\right)}{\left(\frac{iz}{2}\right)^{s-\frac{1}{2}}}J_{s-\frac{1}{2}}(iz) = {}_1F_1(s,2s,-2z),\] \[ \frac{d}{dz} {}_1F_1(a,b,z) = \frac{a}{b}{}_1F_1(a+1,b+1,z),\] and \[ \frac{d}{dz} \left(z^{b-1}{}_1F_1(a,b,z)\right) = (b-1)z^{b-2}{}_1F_1(a,b-1,z),\] where $J_\nu(x)$ is the $J$-Bessel function, the second formula can be found in \cite{GR}, 9.215 \#3 and the last two formulae can be found in \cite{HTFvol1} section 2.1.2.  From these it follows that \[ \int_0^Y u^{s-\frac{1}{2}} e^{-2\pi y u } K_{s-\frac{1}{2}} (2 \pi y u )\, du = \frac{1}{4 (\pi y)^\frac{1}{2} s} \left(A_f(y,s)+ B_f(y,Y,s)+C_f(y,Y,s)\right),\] where \[A_f = A_f(y,s) = (\pi y)^{-s} \Gamma(1/2 +s )\] \[B_f = B_f(y,Y,s) = (\pi y)^{-s} \Gamma(1/2+s) {}_1F_1(-s,1-2s, -4 \pi y Y) \] and \[C_f = C_f(y,Y,s) =  (\pi y Y^2)^{s} \Gamma(1/2 -s) {}_1F_1(s, 1+2s, -4 \pi yY). \]

The $\zeta(2s)$ from the evaluation of the sum over $h$ cancels against the $\zeta(2s)$ in the denominator of \eqref{eisbeginning}, eliminating its poles.  Given that $A_f+B_f+C_f$ is holomorphic in $s$ past the $\real(s) = 0$ line, we are free to pass the contour to the left.  Picking up a residue at $s=0$ we get \begin{eqnarray*}
 \sum_{h \leq Y} E_{f,h}(\psi) & = &- \frac{\Gamma(k)}{(4 \pi)^k} \frac{L(1,\sym^2 f)}{2 \zeta(2)} \int_0^\infty \frac{\psi(y)}{y^2}\,dy \\
& & +\,\,\, \frac{1}{(4\pi)^{k-\frac{1}{2}}} \int_0^\infty \frac{\psi(y)}{y^2}\frac{1}{2 \pi i} \int_{(-a)}\frac{\Lambda(s, f\times f)}{\xi(2s-1)}\frac{\left(A_f(y,s) + B_f(y,Y,s) + C_f(y,Y,s)\right)}{\pi^{-s}\Gamma(s+1)}\,ds\,dy \\ & &  +\,\,\, O_{k,\varepsilon}\left(Y^{\frac{27}{82}+\varepsilon} \int_0^\infty \frac{|\psi(y)|}{y^{\frac{3}{2}}}\,dy\right),
\end{eqnarray*}
where $0<a<\frac{1}{2}$, and we have again made use of the prime number theorem in estimating the error term.

The integrals over $y$ converge at $\infty$, so the asymptotic size of $\sum E_{f,h}(\psi)$ depends only on the behavior of $\psi(y)$ for small $y$.  We now make some estimates assuming that $y$ is small. The remaining contour integral \[ \frac{1}{2 \pi i} \int_{(-a)}\frac{\Lambda(s, f\times f)}{\xi(2s-1)}\frac{\left(A_f(y,s) + B_f(y,Y,s) + C_f(y,Y,s)\right)}{\pi^{-s}\Gamma(s+1)}\,ds \] is a sum of three terms coming from $A_f$, $B_f$ and $C_f$.  The term coming from $A_f$ is $\ll_k y^{\frac{1}{2}}$ as $y \rightarrow 0$.  The hypergeometric function appearing in $B_f$ is bounded above and below by universal constants in the half-plane $\real(s)<0$ and when $0 \leq yY \leq 4 \pi$, thus term coming from $B_f$ is also $\ll y^{\frac{1}{2}}$, uniformly in $Y$.  Thus it remains to inspect the term coming from $C_f$.  Explicitly, let
\begin{eqnarray*} C_{f,1}(y,Y) & := &  \frac{1}{2 \pi i} \int_{(-a)}\frac{\Lambda(s, f\times f)}{\xi(2s-1)}\frac{C_f(y,Y,s)}{\pi^{-s}\Gamma(s+1)}\,ds \\ & = & \frac{1}{2 \pi i} \int_{(-a)}\frac{\Lambda(s, f\times f)}{\xi(2s-1)}\frac{\Gamma\left(\frac{1}{2} -s\right)}{\Gamma(s+1)} {}_1F_1(s, 1+2s, -4 \pi yY) \left(\pi^2 y Y^2\right)^{s} \,ds. \end{eqnarray*}
Thus \begin{eqnarray*} \sum_{h \leq Y} E_{f,h}(\psi)& = & - \frac{\Gamma(k)}{(4 \pi)^k} \frac{L(1,\sym^2 f)}{2 \zeta(2)} \int_0^\infty \frac{\psi(y)}{y^2}\,dy +  \frac{1}{(4\pi)^{k-\frac{1}{2}}} \int_0^\infty \frac{\psi(y)}{y^2}C_{f,1}(y,Y)\,dy \\ & & + \,\,\,O_{k,\varepsilon}\left(Y^{\frac{27}{82}+\varepsilon} \int_0^\infty \frac{|\psi(y)|}{y^{\frac{3}{2}}}\,dy\right).\end{eqnarray*} In the transition region, $C_{f,1}(y,Y)$ is the crucial term.

\begin{lemma}
Suppose that $y,Y \in \R_{>0}$ with $Y$ becoming large and $y$ becoming small.  Then
\[ C_{f,1}(y,Y) = \frac{1}{2 \sqrt{\pi}}c_f\left(4 \pi yY^2\right) + O_k\left(y^{\frac{1}{2}}\right), \]  where \[ c_f(\alpha) = =\frac{\pi^\frac{3}{2}}{2}\alpha \sum_{n \geq 1} \lambda_f(n)^2 W_k(\pi^2 n \alpha), \] and \[W_k(x) =  \frac{1}{2 \pi i}\int_{(1+a)} \frac{\Gamma(s+k-1) \Gamma(s-\frac{1}{2})}{\Gamma(2-s)}x^{-s}  \,ds\]for any fixed $a>0$. 
\end{lemma}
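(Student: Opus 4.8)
\emph{Proof proposal.} The plan is to open $C_{f,1}(y,Y)$ into a Dirichlet series in $\lambda_f(n)^2$ and recognise it, term by term, as the series defining $c_f$. On the line $\real(s)=-a$ one has $\real(1-s)=1+a>1$, so the functional equation $\Lambda(s,f\times f)=\Lambda(1-s,f\times f)$ together with $\xi(2s-1)=\xi(2-2s)$ gives, after the $\zeta(2-2s)$'s and one $\Gamma(1-s)$ cancel,
\[\frac{\Lambda(s,f\times f)}{\xi(2s-1)}=(4\pi)^{s-1}\,\Gamma(k-s)\sum_{n\geq1}\lambda_f(n)^2 n^{s-1},\]
the series $\sum_n\lambda_f(n)^2 n^{s-1}=\sum_n\lambda_f(n)^2 n^{-(1-s)}$ converging absolutely on $\real(s)=-a$ since $\sum_n\lambda_f(n)^2 n^{-\sigma}$ converges for $\sigma>1$. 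Substituting this into the defining integral for $C_{f,1}$ and interchanging the (absolutely convergent) sum with the integral, one obtains
\[C_{f,1}(y,Y)=\frac1{4\pi}\sum_{n\geq1}\frac{\lambda_f(n)^2}{n}\cdot\frac1{2\pi i}\int_{(-a)}\frac{\Gamma(k-s)\,\Gamma\left(\tfrac12-s\right)}{\Gamma(s+1)}\,{}_1F_1(s,1+2s,-4\pi yY)\,\bigl(4\pi^3 n yY^2\bigr)^s\,ds.\]
I would then write ${}_1F_1(s,1+2s,-4\pi yY)=1+\bigl({}_1F_1(s,1+2s,-4\pi yY)-1\bigr)$ and treat the two pieces separately.

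The piece coming from the constant $1$ produces the main term. Put $x=4\pi^3 n yY^2$. The substitutions $s\mapsto-s$ and then $s\mapsto s+1$ turn $\frac1{2\pi i}\int_{(-a)}\frac{\Gamma(k-s)\Gamma\left(\frac12-s\right)}{\Gamma(s+1)}x^{s}\,ds$ into $x\cdot\frac1{2\pi i}\int_{(1+a)}\frac{\Gamma(s+k-1)\Gamma\left(s-\frac12\right)}{\Gamma(2-s)}x^{-s}\,ds=x\,W_k(x)$, directly from the definition of $W_k$ (the line $\real(s)=1+a$ being an admissible contour since $a>0$). Summing over $n$,
\[\frac1{4\pi}\sum_{n\geq1}\frac{\lambda_f(n)^2}{n}\,x_n W_k(x_n)=\pi^2 yY^2\sum_{n\geq1}\lambda_f(n)^2\,W_k\!\left(4\pi^3 n yY^2\right)=\frac1{2\sqrt\pi}\,c_f\!\left(4\pi yY^2\right),\qquad x_n=4\pi^3 n yY^2,\]
which is precisely the claimed main term, with no stray constant.

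It remains to bound the piece coming from ${}_1F_1(s,1+2s,-4\pi yY)-1$ by $O_k(y^{1/2})$, and this is the main obstacle. From $\tfrac{d}{dz}{}_1F_1(a,b,z)=\tfrac{a}{b}{}_1F_1(a+1,b+1,z)$ one has
\[{}_1F_1(s,1+2s,-4\pi yY)-1=\frac{s}{1+2s}\int_0^{-4\pi yY}{}_1F_1(1+s,2+2s,t)\,dt,\]
which in the relevant range $0\leq yY\leq4\pi$ is $O_k(yY)$, with only polynomial growth in $|\imag s|$ (controlled by the standard analytic continuation and uniform asymptotics of the confluent hypergeometric function). Since $\tfrac{s}{(1+2s)\Gamma(s+1)}=\tfrac1{(1+2s)\Gamma(s)}$, the per-$n$ integrand is holomorphic to the left of $\real(s)=-a$ except for a simple pole at $s=-\tfrac12$: the would-be poles at $s=-1,-2,\dots$ are cancelled by the zeros of $\Gamma(s+1)^{-1}$, and the remaining hypergeometric poles at $s=-\tfrac32,-\tfrac52,\dots$ contribute only terms of size $y^{3/2},y^{5/2},\dots$. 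Shifting the contour past $s=-\tfrac12$, the residue there is $\ll_k yY\,x_n^{-1/2}\ll_k y^{1/2}n^{-1/2}$, so summed against $\tfrac1{4\pi}\sum_n\lambda_f(n)^2/n$ it is $\ll_k y^{1/2}\sum_n\lambda_f(n)^2 n^{-3/2}\ll_k y^{1/2}$, the shifted integral being smaller still. I expect the genuinely delicate points to be (i) the uniform control of ${}_1F_1(s,1+2s,\,\cdot\,)$ and of the gamma-quotients along vertical lines, needed both to justify interchanging summation and integration and to legitimise the contour shifts, and (ii) bookkeeping of the successive half-integer residues; the conceptual heart, however, is the single use of the functional equation of $L(s,f\times f)$, which unblocks the line $\real(s)=-a$ — squeezed between the critical strips of $\xi(2s)^{-1}$ and $\xi(2s-1)^{-1}$ — by converting $\Lambda(s,f\times f)/\xi(2s-1)$ into a convergent Dirichlet series.
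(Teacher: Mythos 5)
Your proposal is, at its core, the paper's own argument written in mirror image. The paper applies the functional equation of $L(s,f\times f)$ and substitutes $s\mapsto 1-s$ so as to work on $\real(s)=1+a$ with the Dirichlet series kept packaged as $L(s,f\times f)/\zeta(2s)$; you instead open the Dirichlet series on the left line $\real(s)=-a$ (your identity $\Lambda(s,f\times f)/\xi(2s-1)=(4\pi)^{s-1}\Gamma(k-s)\sum_n\lambda_f(n)^2n^{s-1}$ is correct) and identify the per-$n$ Mellin integral as $x_nW_k(x_n)$ by the same change of variables. The constants check out and the main term is exactly $\frac{1}{2\sqrt\pi}c_f(4\pi yY^2)$. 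Your contour shift past the pole of ${}_1F_1(s,1+2s,\cdot)$ at $s=-\frac12$, contributing $\ll_k yY\,x_n^{-1/2}\ll_k y^{1/2}n^{-1/2}$, is precisely the paper's shift past the pole of ${}_1F_1(1-s,3-2s,\cdot)$ at $s=\frac32$ seen through $s\mapsto 1-s$. So this is essentially the same proof, not a different route.

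There is, however, one step that fails as written: the parenthetical claim that after shifting past $s=-\frac12$ ``the shifted integral [is] smaller still.'' On $\real(s)=-\frac12-\delta$ the $({}_1F_1-1)$ piece contributes $\ll yY\,(yY^2)^{-1/2-\delta}=y^{1/2}(yY^2)^{-\delta}$, which is \emph{larger} than $y^{1/2}$ precisely when $yY^2\to 0$ — one of the two regimes the lemma must cover. The repair is exactly the case split the paper makes explicit: when $yY^2\lesssim 1$ do not shift at all, since on $\real(s)=-a$ with any fixed $0<a<\frac12$ the bound $yY(yY^2)^{-a}=y^{1/2}(yY^2)^{\frac12-a}\le y^{1/2}$ already holds; shift past $s=-\frac12$ only when $yY^2\gtrsim 1$, where your accounting is correct. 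Two further points you should make explicit rather than defer: (i) the estimate ${}_1F_1(s,1+2s,-4\pi yY)-1=O(yY)$ uniformly on the vertical line requires $yY$ to stay bounded (the ratio $s^{(n)}/(1+2s)^{(n)}$ of Pochhammer symbols is uniformly bounded on $\real(s)=-a$, so the tail of the series is genuinely $O((yY)^2)$ there) — the paper shares this implicit hypothesis, and it holds in the application since $y\asymp 1/X$ and $Y\le X$; (ii) you do not need the deeper residues at $s=-\frac32,-\frac52,\dots$ at all, since stopping the contour at $\real(s)=-\frac12-\delta$ suffices, which spares you the bookkeeping you flag as delicate.
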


\begin{proof}
We apply the functional equation for the $L$-function to find that \[C_{f,1}(y,Y) =  \pi^2 y Y^2 \frac{1}{2 \pi i}\int_{(1+a)}\frac{L(s,f\times f)}{\zeta(2s)} \frac{\Gamma(s+k-1) \Gamma(s-\frac{1}{2})}{\Gamma(2-s)} {}_1F_1(1-s,3-2s,-4 \pi y Y) \left(4 \pi^3 yY^2\right)^{-s}\,ds.\]  From the definition one finds that ${}_1F_1(1-s,3-2s,u) = 1 + O_s(u),$ which we use to eliminate the hypergeometric function from the above expression.  We proceed in two slightly different ways depending on whether $yY^2$ becomes large or becomes small.  If $yY^2$ remains bounded, either approach is acceptable.  First, assume that $yY^2$ is becoming small.  In this case, choose $a=\frac{1}{4}$, and observe that the $s$-dependence in the hypergeometric function is uniformly bounded along the line $\real(s) = \frac{5}{4}$.  Together with the rapid decay of the integrand of $C_{f,1}(y,Y)$, this gives us that \[ \left|C_{f,1}(y,Y) - \frac{1}{2 \sqrt{\pi}}c_f\left(4 \pi yY^2\right)\right|  \ll_k  y^{\frac{1}{2}}(yY^2)^{\frac{1}{4}}.\] 

If $yY^2$ becomes large shift the line of integration to the right, past the pole of the hypergeometric function at $s=\frac{3}{2}$ to $a = \frac{3}{4}$.  The contribution to $C_{f,1}(y,Y)$ coming from this residue is $\ll_k y^{\frac{1}{2}}$, uniformly in $Y$, and the $s$-dependence in the hypergeometric function is uniformly bounded along the line $\real(s) = \frac{7}{4}$.  We find in this case that \[\left|C_{f,1}(y,Y) - \frac{1}{2 \sqrt{\pi}}c_f\left(4 \pi yY^2\right)\right| \ll_k y^{\frac{1}{2}}\left( 1+ (yY^2)^{-\frac{1}{4}}\right). \]  
In either case, we obtain the error term stated in the Lemma.

\end{proof}
\begin{lemma}
The function $c_f(\alpha)$ defined above is $C^\infty(\R_{>0})$.  As $\alpha \rightarrow \infty$ it decays faster than any polynomial, and as $\alpha \rightarrow 0$
\[c_f(\alpha) = \frac{\Gamma(k)L(1,\sym^2 f)}{2 \zeta(2)} + E_k(\alpha)\] where $E_k(\alpha) = o_k\left(\alpha^{\frac{1}{2}}\right)$ unconditionally, and $E_k(\alpha) = O_{k,\varepsilon}\left(\alpha^{\frac{3}{4}-\varepsilon}\right)$ for any $\varepsilon>0$ assuming the Riemann hypothesis for the classical Riemann zeta function.
\end{lemma}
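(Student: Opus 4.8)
The plan is to work directly from the Mellin--Barnes integral defining $W_k$ together with the Dirichlet series identity $\sum_{n\ge1}\lambda_f(n)^2 n^{-s}=L(s,f\times f)/\zeta(2s)=\zeta(s)L(s,\sym^2 f)/\zeta(2s)$, valid for $\real(s)>1$. Substituting the definition of $W_k$ into $c_f$ and interchanging the (absolutely convergent) sum and contour integral gives, for any $a>0$,
\[c_f(\alpha)=\frac{\pi^{3/2}}{2}\,\alpha\cdot\frac{1}{2\pi i}\int_{(1+a)}\frac{\Gamma(s+k-1)\,\Gamma(s-\tfrac12)}{\Gamma(2-s)}\cdot\frac{\zeta(s)L(s,\sym^2 f)}{\zeta(2s)}\,(\pi^2\alpha)^{-s}\,ds.\]
By Stirling the quotient of gamma factors decays like $e^{-\pi|t|/2}$ on vertical strips, uniformly for $\real(s)$ in compact sets, which swamps the at most polynomial growth of $\zeta(s)$, of the entire function $L(s,\sym^2 f)$, and of $1/\zeta(2s)$ on the lines used below (on which $1/\zeta$ is bounded by a power of $\log$ by the prime number theorem, or by $(1+|t|)^\varepsilon$ under RH). Hence the interchange and all the contour shifts below are legitimate, and every $s$-integral converges absolutely.

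For the smoothness and the decay at infinity it is cleanest to use the series directly. Shifting the contour in the definition of $W_k$ arbitrarily far to the right --- legitimate since that integrand is holomorphic for $\real(s)>\tfrac12$ --- shows $W_k^{(j)}(x)\ll_{j,M}x^{-M}$ for every $M\ge0$; combined with $\lambda_f(n)^2\ll_\varepsilon n^\varepsilon$, this makes $\sum_n\lambda_f(n)^2 W_k(\pi^2 n\alpha)$ and each of its term-by-term derivatives converge uniformly on compact subsets of $\R_{>0}$, so $c_f\in C^\infty(\R_{>0})$, and $c_f(\alpha)\ll_M\alpha^{1-M}$ as $\alpha\to\infty$.

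The substance is the expansion as $\alpha\to0$, obtained by shifting the contour of the displayed integral to the left. In the region $\tfrac12\le\real(s)<1+a$ --- and, under RH, already in $\tfrac14<\real(s)<1+a$ --- the integrand is holomorphic except for a simple pole at $s=1$ coming from $\zeta(s)$: the pole of $\Gamma(s-\tfrac12)$ at $s=\tfrac12$ is exactly cancelled by the simple zero of $1/\zeta(2s)$ there (since $\zeta(2s)$ has its pole at $s=\tfrac12$), $L(s,\sym^2 f)$ is entire, and $\zeta(2s)$ has no zeros with $\real(2s)\ge1$ (under RH, none with $\real(2s)>\tfrac12$). The residue of the integrand at $s=1$ is $\Gamma(k)\sqrt{\pi}\,\dfrac{L(1,\sym^2 f)}{\zeta(2)}\cdot\dfrac{1}{\pi^2\alpha}$, and multiplying it by $\tfrac{\pi^{3/2}}{2}\alpha$ produces precisely $\dfrac{\Gamma(k)L(1,\sym^2 f)}{2\zeta(2)}$; thus $E_k(\alpha)=\tfrac{\pi^{3/2}}{2}\alpha\cdot\tfrac{1}{2\pi i}\int_{(\sigma)}(\cdots)\,ds$ for any admissible $\sigma$. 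Taking $\sigma=\tfrac12$ gives $E_k(\alpha)\ll_k\alpha^{1/2}$; to improve this to $o_k(\alpha^{1/2})$, let $G(s)$ denote the integrand with the factor $(\pi^2\alpha)^{-s}$ removed, note that $G(\tfrac12+it)$ is continuous and lies in $L^1(\R)$, observe that $E_k(\alpha)$ equals a fixed constant times $\alpha^{1/2}\int_{\R}G(\tfrac12+it)\,e^{-it\log(\pi^2\alpha)}\,dt$, and apply the Riemann--Lebesgue lemma as $\log(\pi^2\alpha)\to-\infty$. Under RH one may instead move the contour to $\real(s)=\tfrac14+\varepsilon$, where $1/\zeta(2s)\ll_\varepsilon(1+|t|)^\varepsilon$, and read off $E_k(\alpha)\ll_{k,\varepsilon}\alpha\cdot\alpha^{-1/4-\varepsilon}=\alpha^{3/4-\varepsilon}$.

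The hard part is the local analysis at $s=\tfrac12$: one must recognise that the pole of $\Gamma(s-\tfrac12)$ is annihilated by the zero of $\zeta(2s)^{-1}$, so that the contour can be brought to --- and, under RH, carried well to the left of --- the critical line without collecting a residue there, and that the only pole actually met in the relevant strip is the one at $s=1$, whose residue has to be checked to give exactly the advertised constant. Everything else (the Stirling estimates justifying the shifts, convexity bounds for $\zeta$ and $L(s,\sym^2 f)$, and the Riemann--Lebesgue step) is routine.
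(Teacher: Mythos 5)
Your proof is correct and follows essentially the same route as the paper: rewrite $c_f(\alpha)$ as a contour integral of $\frac{\Gamma(s+k-1)\Gamma(s-\frac12)}{\Gamma(2-s)}\frac{L(s,f\times f)}{\zeta(2s)}(\pi^2\alpha)^{-s}$, get smoothness and rapid decay from the series with $W_k$ shifted right, and obtain the $\alpha\to0$ expansion by shifting left past the pole at $s=1$ (whose residue you correctly check gives $\frac{\Gamma(k)L(1,\sym^2 f)}{2\zeta(2)}$), with the error controlled on $\real(s)=\frac12$, or on $\real(s)=\frac14+\varepsilon$ under RH. The only cosmetic difference is that you obtain the $o_k(\alpha^{1/2})$ by staying on the critical line and invoking Riemann--Lebesgue, whereas the paper pushes the contour just past $\real(s)=\frac12$ using the zero-free region; both are valid and your observation that the pole of $\Gamma(s-\frac12)$ is cancelled by the pole of $\zeta(2s)$ is exactly the point that makes either version work.
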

\begin{proof}
The $W_k(x)$ defined above is $C^\infty(\R)$ and its integrand has no poles to the right, thus $W_k(x)$ is rapidly decaying as $x \rightarrow + \infty$.  After differentiating the series for $c_f(\alpha)$ in the second line of Lemma 1 arbitrarily many times, the resulting series for $c^{(n)}_f(\alpha)$ converges absolutely for any $\alpha >0$, so $c_f \in C^\infty(\R_{>0})$.  The rapid decay of $c_f(\alpha)$ follows from that of $W_k(x)$.  By shifting the line of integration in the definition of $c_f(\alpha)$ to the left, we investigate the behavior of $c_f(\alpha)$ near $\alpha = 0$.  The main term comes from the residue of the pole of $L(s, f \times f)$ at $s=1$, and the error term is estimated by pushing the contour just past the $\real(s) = \frac{1}{2}$ line, or to the $\real(s) = \frac{1}{4}+\varepsilon$ line if one assumes the Riemann hypothesis.
\end{proof}

Thus we have \begin{prop} For $E_{f,h}(\psi)$ and $c_f(\alpha)$ as defined above, we have that \[ \sum_{h \leq Y} E_{f,h}(\psi) = \frac{1}{(4 \pi)^k} \int_0^\infty \left( c_f\left(4 \pi yY^2\right) - \frac{\Gamma(k) L(1,\sym^2 f)}{2\zeta(2)} \right) \frac{\psi(y)}{y^2}\,dy + O_{k,\varepsilon}\left(Y^{\frac{27}{82}+\varepsilon} \int_0^\infty \frac{\psi(y)}{y^{\frac{3}{2}}}\,dy\right).\] \end{prop}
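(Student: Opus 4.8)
The plan is to assemble the Proposition from two ingredients already established above: the identity
\[\sum_{h \leq Y} E_{f,h}(\psi) = - \frac{\Gamma(k)}{(4 \pi)^k} \frac{L(1,\sym^2 f)}{2 \zeta(2)} \int_0^\infty \frac{\psi(y)}{y^2}\,dy +  \frac{1}{(4\pi)^{k-\frac{1}{2}}} \int_0^\infty \frac{\psi(y)}{y^2}C_{f,1}(y,Y)\,dy + O_{k,\varepsilon}\left(Y^{\frac{27}{82}+\varepsilon} \int_0^\infty \frac{|\psi(y)|}{y^{\frac{3}{2}}}\,dy\right)\]
obtained via the spectral unfolding, the Van der Corput–type evaluation of $\sum_{h\le x}\sum_{ab=h}(a/b)^{s-1/2}$, and the contour shift past $s=0$; together with the evaluation $C_{f,1}(y,Y) = \tfrac{1}{2\sqrt\pi}c_f(4\pi yY^2) + O_k(y^{1/2})$ of Lemma 1. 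First I would insert the asymptotic for $C_{f,1}$ into the second integral. Since $\psi$ is smooth of fixed compact support in $\R_{>0}$, the range of $y$ that matters is bounded, and with $Y$ large the argument $yY^2$ falls in the regime covered by Lemma 1, so the substitution under the integral sign is legitimate.

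Next I would reconcile the normalizing constants. Using $(4\pi)^{k-\frac12} = (4\pi)^k/(2\sqrt\pi)$, the main part of the $C_{f,1}$ integral becomes
\[\frac{1}{(4\pi)^{k-\frac12}}\cdot\frac{1}{2\sqrt\pi}\int_0^\infty c_f\!\left(4\pi y Y^2\right)\frac{\psi(y)}{y^2}\,dy = \frac{1}{(4\pi)^k}\int_0^\infty c_f\!\left(4\pi y Y^2\right)\frac{\psi(y)}{y^2}\,dy,\]
which upon combining with the first term of the displayed identity yields exactly the asserted main term
\[\frac{1}{(4\pi)^k}\int_0^\infty\left(c_f\!\left(4\pi y Y^2\right) - \frac{\Gamma(k)L(1,\sym^2 f)}{2\zeta(2)}\right)\frac{\psi(y)}{y^2}\,dy.\]

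It then remains to absorb the error generated by the $O_k(y^{1/2})$ term of Lemma 1. Integrated against $\psi(y)/y^2$ over the compact support of $\psi$, it contributes $\ll_k \int_0^\infty |\psi(y)| y^{-3/2}\,dy$, and since $Y$ is large (in particular $Y^{27/82+\varepsilon}\ge 1$) this is dominated by the already-present Van der Corput error $O_{k,\varepsilon}\!\left(Y^{27/82+\varepsilon}\int_0^\infty |\psi(y)|y^{-3/2}\,dy\right)$; replacing $|\psi|$ by $\psi$ in the statement is harmless as the smoothing $\psi$ is taken nonnegative.

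The main obstacle is not located in this Proposition at all — every step here is a direct substitution, the only points of care being (i) that the powers of $4\pi$ and $\sqrt\pi$ combine correctly, and (ii) that the $y^{1/2}$ error of Lemma 1 is integrable against $\psi(y)/y^2$ and is dominated by the $Y$-error uniformly in $Y\ge 1$, both of which are immediate. The genuine work sits upstream: in deriving the displayed identity (the spectral decomposition and the summation over $h$) and in Lemma 1, whose crux is the functional equation of $L(s,f\times f)$ used to pull the hypergeometric term $C_{f,1}$ into the shape of $c_f$; here we are entitled to invoke both.
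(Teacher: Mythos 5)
Your proposal is correct and is essentially the paper's own argument: the Proposition is obtained exactly by substituting Lemma 1 into the displayed identity for $\sum_{h\le Y}E_{f,h}(\psi)$, checking that $(4\pi)^{-(k-\frac12)}\cdot(2\sqrt\pi)^{-1}=(4\pi)^{-k}$, and absorbing the integrated $O_k(y^{1/2})$ error into the Van der Corput term since $Y\ge 1$. Nothing is missing.
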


The term involving $\frac{27}{82}$ is smaller than the remainder terms coming from Maass forms, as we will see in the next section.

\subsection{Maass Forms}

The discrete spectrum of $\Delta$ is spanned by Maass cusp forms.  The Hecke algebra acting on $L^2( \Gamma \backslash \mathcal{H})$ is defined to be the algebra generated by the commuting self-adjoint bounded operators $T_n$, where for $u \in L^2(\Gamma \backslash \mathcal{H})$, define $T_n$ by \[(T_nu)(z) := \frac{1}{\sqrt{n}} \sum_{ad=n} \sum_{b \!\!\!\!\! \pmod d} u\left( \frac{az+b}{d}\right).\]  These operators commute with $\Delta$ as well, so in fact our basis of Maass forms can be taken to be eigenfunctions of the Hecke algebra as well, and we denote the Hecke eigenvalues of the Maass form $u_j$ by $\lambda_{u_j}(n)$.  A Maass form of Laplace eigenvalue $\lambda_j = s_j(1-s_j) = \frac{1}{4} + t_j^2$ is cuspidal, so it has a Fourier expansion of the form
\[u_j(z) = \sum_{n \neq 0} a_{u_j}(n) W_{s_j}(nz), \]
where
\[W_{s_j}(z) = 2y^{\frac{1}{2}} K_{it_j}(2 \pi y) e(x).\]

For $\Gamma = SL_2(\Z)$, it was known to Selberg in the early 50s that the smallest Laplace eigenvalue $\lambda_1$ is $>\frac{1}{4}$, hence $t_j \in \R$.  For a proof of this fact, see \cite{HejhalSTFvol2}, chapter 11.  Computationally, it has been verified that $t_1 = 9.53369526\ldots$, see for example \cite{Hejhalnumerical}.  To apply the spectral theorem we must assume the normalization $||u_j||_{L^2}^2 = 1$, in which case the Fourier coefficient and Hecke eigenvalue are related by  \[a_{u_j}(n) = \left(\frac{\cosh\pi t_j}{2 |n| L(1,\sym^2 u_j)}\right)^{\frac{1}{2}}\lambda_{u_j}(n),\] where the symmetric square $L$-function appearing here is defined \[ L(s, \sym^2 u_j) = \prod_p \left(1-\frac{\alpha_{u_j}(p)^2}{p^s}\right)^{-1}  \left(1-\frac{\alpha_{u_j}(p)\beta_{u_j}(p)}{p^s}\right)^{-1}  \left(1-\frac{\beta_{u_j}(p)^2}{p^s}\right)^{-1}.\]  The Hecke eigenvalues conform to the bound $|\lambda_{u_j}(n)| \leq d(n)n^{\theta}$, where the generalized Ramanujan conjecture implies that $\theta = 0$ is admissible, and the best known unconditional bound is due to Kim and Sarnak \cite{KimSarnak}, which gives $\theta = \frac{7}{64}$.

We call the Maass form contribution to \eqref{expansion} \[M_{f,h}(\psi) := \sum_{j=1}^\infty \langle P_h(\cdot | \psi),u_j\rangle \langle F u_j,F\rangle.\] By unfolding we have \[ \langle P_h(\cdot | \psi), u_j\rangle  =  \left(\frac{\cosh \pi t_j}{L(1, \sym^2 u_j)}\right)^\frac{1}{2} \lambda_{u_j}(h) \int_0^\infty \frac{\psi(y)}{y^{\frac{3}{2}}} e^{-2\pi h y} K_{it_j}(2 \pi h y)\,dy \] so that \[ \sum_{h \leq Y} M_{f,h}(\psi) = \int_0^\infty  \frac{\psi(y)}{y^{\frac{3}{2}}} \sum_{j=1}^\infty \left(\frac{\cosh \pi t_j}{L(1, \sym^2 u_j)}\right)^\frac{1}{2} \langle F u_j,F\rangle \sum_{h \leq Y} \lambda_{u_j}(h)  e^{-2\pi h y} K_{it_j}(2 \pi h y)\,dy \]

\begin{lemma}
The spectral sum \[\sum_{j=1}^\infty \left(\frac{\cosh \pi t_j}{L(1, \sym^2 u_j)}\right)^\frac{1}{2} \langle F u_j,F\rangle \sum_{h \leq Y} \lambda_{u_j}(h)  e^{-2\pi h y} K_{it_j}(2 \pi h y) \]
appearing in $\sum M_{f,h}(\psi)$ converges absolutely.
\end{lemma}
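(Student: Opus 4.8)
The plan is to bound the spectral sum term by term and show that the resulting series converges by exploiting three sources of decay: the exponential decay of the $K$-Bessel function in the spectral parameter $t_j$, the decay of the triple product $\langle Fu_j,F\rangle$ in $t_j$, and summation over $h$. First I would estimate the inner sum $\sum_{h\le Y}\lambda_{u_j}(h)e^{-2\pi hy}K_{it_j}(2\pi hy)$. Using the Hecke bound $|\lambda_{u_j}(h)|\le d(h)h^\theta$ and the standard bound $K_{it_j}(x)\ll e^{-\frac{\pi}{2}t_j}\min(\ldots)$ — more precisely, the exponential decay $K_{it_j}(x)\ll_\varepsilon t_j^{-1/3}e^{-\frac{\pi}{2}t_j}$ uniformly for $x$ in compact sets away from $0$, together with the small-argument asymptotic $|K_{it}(z)|\sim \pi|\sin(t\log z/2)|/|\Gamma(1+it)\sinh(\pi t)|$ recorded in the preliminaries — one gets that the $h$-sum is $\ll_{y,Y,\varepsilon}\, e^{-\frac{\pi}{2}t_j}\,t_j^{A}$ for some fixed power $A$ and every $\varepsilon>0$. (The precise $Y$- and $y$-dependence is irrelevant for \emph{absolute convergence} in $j$; only the growth in $t_j$ matters, and we hold $y,Y$ fixed as in the statement.)

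Next I would handle the triple product $\langle Fu_j,F\rangle$. The key input is that this inner product decays exponentially in $t_j$: one has $\langle Fu_j,F\rangle \ll_{f,\varepsilon} e^{-\frac{\pi}{2}t_j}t_j^{B}$ for some fixed $B$, which follows from unfolding $\langle Fu_j,F\rangle$ against a Poincaré-type integral and invoking the rapid decay of $F=y^{k/2}f$ at the cusp, or alternatively from the Watson/Ichino triple-product formula together with convexity bounds for the relevant $L$-functions and the exponential decay of the archimedean factor; either way the gamma factors in the completed triple product supply a factor $e^{-\pi t_j}$ and the $L$-values contribute only polynomially. Finally, the factor $(\cosh\pi t_j/L(1,\sym^2 u_j))^{1/2}$ grows like $e^{\frac{\pi}{2}t_j}t_j^{o(1)}$, since $L(1,\sym^2 u_j)^{-1}\ll_\varepsilon t_j^\varepsilon$ by the standard lower bound for $L(1,\sym^2 u_j)$ (Hoffstein–Lockhart).

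Putting the three estimates together, the general term of the spectral sum is bounded by
\[
e^{\frac{\pi}{2}t_j}\cdot e^{-\frac{\pi}{2}t_j}\cdot e^{-\frac{\pi}{2}t_j}\cdot t_j^{C} = e^{-\frac{\pi}{2}t_j}\,t_j^{C}
\]
for some fixed $C$ depending on $k$ (and on $y,Y$ through the polynomial factors, but those are harmless constants here). Combining this with Weyl's law $\#\{j: t_j\le T\}\asymp T^2$ shows the series converges absolutely and in fact very rapidly. The main obstacle is getting the exponential decay of $\langle Fu_j,F\rangle$ cleanly and uniformly enough: one must make sure the archimedean gamma factors indeed contribute the full $e^{-\pi t_j}$ and that this beats the $e^{\frac{\pi}{2}t_j}$ from the normalization with room to spare for the $K$-Bessel and Weyl-law factors — essentially the same "exponential decay of the continuous/discrete spectral coefficients against a fixed $L^2$ function" phenomenon that underlies the convergence of \eqref{expansion} itself. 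Once that bound is in hand, the rest is routine.
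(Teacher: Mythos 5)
Your proposal is correct and follows essentially the same route as the paper: Watson's formula plus Stirling, convexity for $L(\tfrac12,f\times f\times u_j)$ and the Hoffstein--Lockhart lower bound show that the normalization factor times the inner product is polynomially bounded in $t_j$, the net exponential decay $e^{-\frac{\pi}{2}|t_j|}$ comes from the small-argument behaviour of $K_{it_j}$ in the $h$-sum, and Weyl's law closes the argument. The only difference is that the paper bounds the $h$-sum via Hafner--Ivi\'c and partial summation to extract the quantitative factor $Y^{\frac13(1+\theta)}$ needed later in Proposition 2, whereas your cruder term-by-term treatment of the $h$-sum is enough for the absolute convergence asserted in the Lemma itself.
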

\begin{proof}
There are three factors in the summand: that which involves $\cosh \pi t_j$ , the inner product, and the sum over $h$.  We show that the first of these two balance each other, and then show that the sum over $h$ decays rapidly in $|t_j|$, uniformly in the other variables.  To study $|\langle F u_j, F\rangle|$ we will use a beautiful formula of Watson \cite{Watson}, but follow a classical work-out of it from Soundararajan's paper \cite{WeakSC}.  In that paper both $f$ and $u_j$ are normalized to have mass 1, but in this paper we take $f$ to be Hecke normalized so that Watson's formula is

\[|\langle F u_j, F \rangle |^2 = \frac{1}{8} \left( \frac{\Gamma(k)}{(4 \pi)^{k}} \frac{\text{Vol}\left(\Gamma \backslash \mathcal{H}\right)}{\zeta(2)}\right)^{2} \frac{\Lambda(\frac{1}{2}, f \times f \times u_j)}{L_\infty(1,\sym^2 f)^2 \Lambda(1, \sym^2 u_j)},\] where
\[\Lambda(s,f \times f \times u_j) = L_\infty(s,f \times f \times u_j) L(s, f \times f \times u_j),\]
\[L_\infty(s, \sym^2 f ) =  \pi^{-\frac{3}{2}s}\Gamma\left(\frac{s+1}{2}\right)\Gamma\left(\frac{s+k-1}{2}\right)\Gamma\left(\frac{s+k}{2}\right),\]
\[ \Lambda(s,\sym^2 u_j) = L_\infty(s,\sym^2 u_j) L(s,\sym^2 u_j),\]
\[L_\infty(s, f \times f \times u_j) = \pi^{-4s}\prod_{\pm} \Gamma\left(\frac{s+k-1\pm i t_j}{2}\right)\Gamma\left(\frac{s+k \pm it_j}{2}\right)\Gamma\left(\frac{s+1\pm i t_j}{2}\right)\Gamma \left(\frac{s \pm it_j}{2}\right) ,\]
\[L_\infty(s,\sym^2 u_j) = \pi^{-3s/2}\Gamma\left(\frac{s-2it}{2}\right)\Gamma\left(\frac{s}{2}\right) \Gamma\left(\frac{s+2it}{2}\right)\]
and
\[ L(s,f \times f \times u_j) = \prod_p \left(1 - \frac{\alpha_f(p)^2\alpha_{u_j}(p)}{p^s}\right)^{-1}\left(1 - \frac{\alpha_{u_j}(p)}{p^s}\right)^{-2}\left(1 - \frac{\beta_f(p)^2\alpha_{u_j}(p)}{p^s}\right)^{-1}\]\[\times \left(1 - \frac{\alpha_f(p)^2\beta_{u_j}(p)}{p^s}\right)^{-1}\left(1 - \frac{\beta_{u_j}(p)}{p^s}\right)^{-2}\left(1 - \frac{\beta_f(p)^2\beta_{u_j}(p)}{p^s}\right)^{-1}.\] The archimedian parts evaluate to
\[ \frac{L_\infty(\frac{1}{2}, f \times f \times u_j)}{L_\infty(1,\sym^2 f)^2L_\infty(1,\sym^2 u_j)} 
= 4 \pi^2\frac{ |\Gamma(k-\frac{1}{2}+ i t_j)|^2}{ \Gamma(k)^2} \]
after repeated application of the duplication formula.  Using this, Watson's formula simplifies to \begin{equation}\label{Watson2}| \langle F u_j,F\rangle | = \sqrt{2} \frac{|\Gamma(k-\frac{1}{2} + it_j)|}{(4 \pi)^k} \left(\frac{L(\frac{1}{2}, f \times f \times u_j)}{L(1,\sym^2 u_j)}\right)^{\frac{1}{2}}\end{equation} for $f$ Hecke normalized, and $u_j$ mass 1 normalized.  By applying Stirling's formula, we find that $|\Gamma(k-\frac{1}{2} + it_j)|(\cosh \pi t_j)^\frac{1}{2}$ is polynomially bounded as $|t_j| \rightarrow \infty$.  Together with standard convexity bounds in the $|t_j|$ aspect for $L(\frac{1}{2},f \times f \times u_j)$, we find that \[ \left|\frac{\cosh \pi t_j}{L(1, \sym^2 u_j)}\right|^\frac{1}{2} |\langle F u_j,F\rangle| \] is polynomially bounded as $|t_j|$ gets large.  

Now we turn to the sum over $h$.  We have by a ``folklore'' result written down by Hafner and Ivi\'{c} \cite{HafnerIvic} that \[ \sum_{h \leq Y} \lambda_{u_j}(h) \ll_{u_j} Y^{\frac{1}{3}(1+ \theta)}\] where $\theta = 0$ or $=\frac{7}{64}$ as above, and where the implied constants depend at most polynomially on $|t_j|$.  As on the Eisenstein series side, the conjectural truth is $O_{u,\varepsilon}(Y^{\frac{1}{4}+\varepsilon})$, but this seems very difficult.  By partial summation \[ \sum_{h \leq Y} \lambda_u(h)  e^{-2\pi h y} K_{it_j}(2 \pi h y) \ll_{u_j}  e^{-2\pi y Y} K_{it_j}(2 \pi y Y) Y^{\frac{1}{3}(1+ \theta)} +  \int_{\frac{1}{2}}^Y u^{\frac{1}{3}(1+ \theta)} \left| \frac{\partial}{\partial u} e^{-2 \pi y u} K_{it_j}(2 \pi y u)\right| \,du, \] where the implied constants again depend at most polynomially on $|t_j|$.   We have that $|K_{it}(u)| \sim \pi \frac{|\sin(t \log u/2)|}{|\Gamma(1+it) \sinh(\pi t)|}$ for small $u$, thus after taking derivatives, changing variables, and using the power series expansion for the lower incomplete gamma function, we have that  \begin{equation}\label{maasssum} \sum_{h \leq Y} \lambda_u(h)  e^{-2\pi h y} K_{it_j}(2 \pi h y) \leq P(t_j) e^{-\frac{\pi}{2} |t_j|} Y^{\frac{1}{3}(1+\theta)},\end{equation} uniformly in $y$, where $P(t)$ is a real-valued function on $\R$ that grows at most polynomially as $|t|$ becomes large.  From these estimates together with Weyl's law (see, e.g. Iwaniec \cite{SpecIw}) \[ \sum_{|t_j|\leq T} 1 = \frac{\text{Vol}\left(\Gamma \backslash \mathcal{H}\right)}{4 \pi } T^2 + O_\varepsilon\left((1+T)^{1+\varepsilon}\right)\] the Lemma follows.
\end{proof}
We apply trivial estimates along with \eqref{maasssum} to obtain \begin{prop} For $M_{f,h}(\psi)$ defined above we have  \[\sum_{h \leq Y} M_{f,h}(\psi) \ll_k Y^{\frac{1}{3}(1+\theta)}\int_0^\infty \frac{|\psi(y)|}{y^{\frac{3}{2}}}\,dy. \] \end{prop}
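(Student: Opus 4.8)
The plan is to combine the two pointwise estimates produced inside the proof of Lemma 3 with Weyl's law, bounding the spectral sum term by term. Starting from the identity derived just before Lemma 3,
\[ \sum_{h \leq Y} M_{f,h}(\psi) = \int_0^\infty \frac{\psi(y)}{y^{3/2}} \sum_{j=1}^\infty \left(\frac{\cosh \pi t_j}{L(1,\sym^2 u_j)}\right)^{1/2} \langle F u_j, F \rangle \sum_{h \leq Y} \lambda_{u_j}(h)\, e^{-2\pi h y} K_{it_j}(2\pi h y) \, dy, \]
the first step is to justify pulling absolute values inside both the $j$-sum and the $y$-integral. This is exactly what Lemma 3 provides: it establishes absolute convergence of the spectral sum, so by dominated convergence the interchange is legitimate, and the problem is reduced to estimating
\[ \int_0^\infty \frac{|\psi(y)|}{y^{3/2}} \sum_{j=1}^\infty \left|\frac{\cosh \pi t_j}{L(1,\sym^2 u_j)}\right|^{1/2} |\langle F u_j, F \rangle| \; \left| \sum_{h \leq Y} \lambda_{u_j}(h)\, e^{-2\pi h y} K_{it_j}(2\pi h y) \right| \, dy. \]

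Next I would insert the two bounds already obtained in the proof of Lemma 3. By Watson's formula \eqref{Watson2}, Stirling, and the convexity bound for $L(\tfrac12, f \times f \times u_j)$ in the $|t_j|$-aspect, the archimedean-and-$L$-value factor is polynomially bounded,
\[ \left|\frac{\cosh \pi t_j}{L(1,\sym^2 u_j)}\right|^{1/2} |\langle F u_j, F \rangle| \ll_k Q(t_j) \]
for some polynomial $Q$, while the $h$-sum obeys \eqref{maasssum}, i.e. it is at most $P(t_j)\, e^{-\frac{\pi}{2}|t_j|} Y^{\frac{1}{3}(1+\theta)}$ uniformly in $y$. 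The factor $Y^{\frac13(1+\theta)}$ and the integral $\int_0^\infty |\psi(y)|\, y^{-3/2}\,dy$ then factor cleanly out of the estimate, leaving the requirement that
\[ \sum_{j=1}^\infty P(t_j) Q(t_j)\, e^{-\frac{\pi}{2}|t_j|} \ll_k 1. \]
Grouping the $t_j$ into dyadic blocks $T \leq |t_j| < 2T$, Weyl's law gives $O(T^{2+\varepsilon})$ forms in each block, so a block contributes $O\!\left(T^{C} e^{-\frac{\pi}{2} T}\right)$ for a fixed $C$; summing this rapidly convergent series over dyadic $T$ gives an absolute constant depending only on $k$. Collecting everything yields precisely $\sum_{h \le Y} M_{f,h}(\psi) \ll_k Y^{\frac13(1+\theta)} \int_0^\infty |\psi(y)|\, y^{-3/2}\,dy$.

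I expect the proof to be essentially bookkeeping once Lemma 3 is in hand. The only genuine subtlety — and it has already been handled there — is the bound \eqref{maasssum} with a constant that is \emph{uniform in $y$} and decays exponentially in $|t_j|$: the uniformity in $y$ is what lets the entire $y$-dependence collapse into the single factor $\int_0^\infty |\psi(y)|\, y^{-3/2}\,dy$, and the exponential decay in $|t_j|$ is what, via Weyl's law, absorbs the polynomial growth coming from the convexity bound. So beyond assembling these ingredients and tracking the dependence on $k$, I anticipate no real obstacle.
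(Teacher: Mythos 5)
Your proposal is correct and follows essentially the same route as the paper, which simply invokes the bound \eqref{maasssum}, the polynomial boundedness of the spectral factor from Watson's formula, and Weyl's law, with the term-by-term estimation justified by the absolute convergence established in Lemma 3. Your write-up merely makes explicit the dyadic summation over $t_j$ and the interchange of sum and integral that the paper leaves implicit.
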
 The application of trivial bounds is justified by the absolute convergence given by Lemma 3.

\section{The Main Theorem and its Corollaries}

Drawing together the propositions from the two preceding sections, we obtain
\begin{mt}
Let $\psi $ be any measurable function on $\R_{>0}$ such that the incomplete Poincar\'{e} series \[P_h(z|\psi) := \sum_{\gamma \in \Gamma_\infty \backslash \Gamma} e(h\gamma z) \psi(\imag(\gamma z))\] is a smooth and bounded $L^2$ function on $\Gamma \backslash \mathcal{H}$.  Denote the shifted convolution sum \[ S_{f}(\psi,Y) = \sum_{h \leq Y} \sum_{n \geq 1} \lambda_f(n)\lambda_f(n+h) (n(n+h))^{\frac{k-1}{2}}\int_0^\infty \psi(y) e^{-4 \pi (n+h)y} y^{k-2} \, dy.\]  Then we have that \[ S_f(\psi,Y) =  \frac{1}{(4 \pi)^k} \int_0^\infty \left( c_f\left(4 \pi yY^2\right) - \frac{\Gamma(k) L(1,\sym^2 f)}{2\zeta(2)} \right) \frac{\psi(y)}{y^2}\,dy+ O_k\left(Y^{\frac{1}{3}(1+\theta)}\int_0^\infty \frac{|\psi(y)|}{y^{\frac{3}{2}}}\,dy\right),\] where $c_f(\alpha)$ is the function defined in Lemma 1 and whose properties are given in Lemma 2, and $\theta =0$ or $\frac{7}{64}$ depending on whether we assume the truth of the generalized Ramanujan conjecture or not.
\end{mt}

It should be noted that the size of the exponent of $Y$ in the error term here depends on the sharp cut-off in $h$, and could be made smaller if we were to smooth that sum.  Now we make some choices for $\psi$, and record the results as corollaries.  First, we give the result stated in the introduction.
\begin{proof}[Proof of Corollary 1.]
Let $\psi$ be a smooth approximation to a point mass.  Specifically, let $\psi$ be smooth, non-negative, supported on a set of radius $X^{-4}$ about the point $y = \frac{1}{4 \pi X}$ and have mass $1$.  Then for any continuously differentiable function $\phi$ on $\R_{>0}$, we have \[\left|\phi\left(\frac{1}{4\pi X}\right) - \int_0^\infty \psi(y) \phi(y)\,dy \right| \ll \left|\phi'\left(\frac{1}{4\pi X}\right)\right| X^{-4},\] where the implied constants are absolute.  First, let $\phi(u) = u^{k-2}e^{-4\pi(n+h)u}$, so that $\left| \phi' \left(\frac{1}{4\pi X}\right)\right| \ll_k (n+h) X^{3-k} e^{-\frac{n+h}{X}}$, and thus we find that\[\left|S_f(\psi,Y) - \sum_{h \leq Y} \sum_{n \geq 1} \lambda_f(n) \lambda_f(n+h) \frac{(n(n+h))^{\frac{k-1}{2}}}{X^{k-2}} e^{-\frac{n+h}{X}}\right| \ll_{k,\varepsilon} X^{1+\varepsilon},\] hence the difference between the left hand sides of the Main Theorem and Corollary 1 is $\ll_\varepsilon X^\varepsilon$. Secondly, let \[\phi(u) = \left(c_f(4 \pi u Y^2) - \frac{\Gamma(k) L(1,\sym^2 f)}{2 \zeta(2)} \right) \frac{1}{u^2}.\]  From the definition of $c_f(\alpha)$ as a sum one sees that $|c'_f(\alpha)| \ll_f \alpha^{-1}$ as $\alpha \rightarrow 0$, so that $\left| \phi' \left(\frac{1}{4\pi X}\right)\right| \ll_f X^3$.  The error term can be treated similarly.   Hence, the difference between the right hand side of the Main Theorem and the right hand side of Corollary 1 is $\ll_f X^{-2}$ with this choice of $\psi$.
\end{proof}
Let \[\Gamma(s,x) = \int_x^\infty e^{-t}t^s\,\frac{dt}{t}\] denote the incomplete gamma function.
\begin{cor} Let
\[\Sigma_f(X,Y) := \sum_{h \leq Y} \sum_{n \geq 1} \lambda_f(n) \lambda_f(n+h) \left(1-\frac{h}{n+h}\right)^{\frac{k-1}{2}} \frac{\Gamma(k-1,(k-1)\frac{n+h}{X})}{\Gamma(k-1)} \]
Then \[ \Sigma_f(X,Y) = -\frac{L(1,\sym^2 f)}{2 \zeta(2)}X + \frac{Y^2}{\Gamma(k-1)} \int_{(k-1)\frac{Y^2}{ X}}^\infty \frac{c_f(u)}{u^2}\,du + O_k\left(X^\frac{1}{2} Y^{\frac{1}{3}(1+\theta)}\right),\] where the integral appearing here either cancels the main term if $\frac{Y^2}{X} $ approaches $0$, or decays rapidly as a function of $\frac{Y^2}{X}$ if this parameter grows without bound. 
\end{cor}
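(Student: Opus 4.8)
The plan is to apply the Main Theorem with a cut-off $\psi$ chosen so that $S_f(\psi,Y)$ reproduces $\Sigma_f(X,Y)$ on the nose. The shape of $\psi$ is dictated by inverting the Laplace-type transform $\int_0^\infty \psi(y)e^{-4\pi(n+h)y}y^{k-2}\,dy$ that appears in $S_f(\psi,Y)$. Writing $\bigl(1-\tfrac{h}{n+h}\bigr)^{(k-1)/2} = n^{(k-1)/2}(n+h)^{-(k-1)/2}$ and substituting $t = 4\pi(n+h)y$ in $\Gamma\!\bigl(k-1,(k-1)\tfrac{n+h}{X}\bigr) = \int_{(k-1)(n+h)/X}^\infty e^{-t}t^{k-2}\,dt$, one is led to the identity
\[
\frac{1}{\Gamma(k-1)}\Bigl(1-\tfrac{h}{n+h}\Bigr)^{\frac{k-1}{2}}\Gamma\!\Bigl(k-1,(k-1)\tfrac{n+h}{X}\Bigr) = \bigl(n(n+h)\bigr)^{\frac{k-1}{2}}\int_0^\infty \psi_0(y)\,e^{-4\pi(n+h)y}y^{k-2}\,dy,
\]
with $\psi_0(y) := \frac{(4\pi)^{k-1}}{\Gamma(k-1)}\,\mathbf{1}_{[a,\infty)}(y)$ and $a := \frac{k-1}{4\pi X}$, so that formally $\Sigma_f(X,Y) = S_f(\psi_0,Y)$.

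Since $\psi_0$ is discontinuous, the corresponding incomplete Poincar\'{e} series is not smooth, so the Main Theorem does not apply to $\psi_0$ directly. I would therefore smooth, just as in the proof of Corollary 1: fix a tiny $\delta>0$ and take $\psi$ smooth, equal to $\psi_0$ on $(0,a-\delta]\cup[a,\infty)$ and interpolating monotonically on $[a-\delta,a]$, so that $\psi-\psi_0$ is supported on $[a-\delta,a]$ with $\lvert\psi-\psi_0\rvert\ll_k 1$. Although $\psi$ is not compactly supported it is supported in $[a-\delta,\infty)$, whence $P_h(z|\psi)$ is a locally finite sum, hence smooth, is bounded on $\Gamma\backslash\mathcal{H}$, and decays like $e(hz)$ in the cusp, so it lies in $L^2$ and the Main Theorem applies. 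Using $a\asymp X^{-1}$ and the Deligne bound $\lvert\lambda_f(n)\rvert\le d(n)$ one gets $\lvert S_f(\psi,Y)-\Sigma_f(X,Y)\rvert\ll_{k,\varepsilon}\delta X^{2+\varepsilon}Y$, the main term of the Main Theorem changes by $O_k(\delta X^2)$, and $\int_0^\infty \frac{\lvert\psi(y)\rvert}{y^{3/2}}\,dy = \frac{(4\pi)^{k-1}}{\Gamma(k-1)}\cdot\frac{2}{\sqrt{a}} + O_k(\delta X^{3/2}) \ll_k X^{1/2}$; choosing $\delta = X^{-10}$ kills the first two discrepancies and turns the error term into $O_k\bigl(X^{1/2}Y^{(1+\theta)/3}\bigr)$.

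It then remains to evaluate the main term of the Main Theorem for $\psi_0$. The constant piece is
\[
-\frac{1}{(4\pi)^k}\cdot\frac{(4\pi)^{k-1}}{\Gamma(k-1)}\cdot\frac{\Gamma(k)L(1,\sym^2 f)}{2\zeta(2)}\int_a^\infty \frac{dy}{y^2} = -\frac{\Gamma(k)}{(k-1)\Gamma(k-1)}\cdot\frac{L(1,\sym^2 f)}{2\zeta(2)}X = -\frac{L(1,\sym^2 f)}{2\zeta(2)}X
\]
by $\Gamma(k) = (k-1)\Gamma(k-1)$, and in the $c_f$ piece the substitution $u = 4\pi yY^2$ converts $\frac{1}{4\pi\Gamma(k-1)}\int_a^\infty c_f(4\pi yY^2)\,\frac{dy}{y^2}$ into $\frac{Y^2}{\Gamma(k-1)}\int_{(k-1)Y^2/X}^\infty \frac{c_f(u)}{u^2}\,du$; together these give the stated formula. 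The last sentence of the Corollary is then read off from Lemma 2: for $Y^2/X\to\infty$ the super-polynomial decay of $c_f$ makes $\frac{Y^2}{\Gamma(k-1)}\int_{(k-1)Y^2/X}^\infty \frac{c_f(u)}{u^2}\,du \ll_{k,N} X\,(X/Y^2)^{N}$ for every $N$, negligible against $-\frac{L(1,\sym^2 f)}{2\zeta(2)}X$; for $Y^2/X\to 0$, splitting $c_f(u) = \frac{\Gamma(k)L(1,\sym^2 f)}{2\zeta(2)} + E_k(u)$ and integrating the constant part recovers $+\frac{L(1,\sym^2 f)}{2\zeta(2)}X$, cancelling the main term, while the $E_k$-part is $o_k\bigl(X^{1/2}Y\bigr)$ via $E_k(u) = o_k(u^{1/2})$.

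I do not expect a genuine obstacle here. The one thing that must be handled with care is the precise normalization constant in $\psi_0$, so that the incomplete-gamma weight in $\Sigma_f$ emerges with exactly the constants claimed; the only mildly non-standard point is noticing that a $\psi$ supported in $[a-\delta,\infty)$ (not compactly supported above) still yields an admissible Poincar\'{e} series for the Main Theorem, because the defining sum is locally finite. Everything else, including the smoothing bookkeeping, is a near-verbatim repeat of the proof of Corollary 1.
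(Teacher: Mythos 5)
Your proposal is correct and is essentially the paper's own (very terse) proof: choose $\psi$ to be a smooth approximation to a constant multiple of the indicator of $(\tfrac{k-1}{4\pi X},\infty)$, apply the Main Theorem, and compute the resulting integrals. Your normalization $\tfrac{(4\pi)^{k-1}}{\Gamma(k-1)}$ is the one that actually reproduces $\Sigma_f(X,Y)$ and the stated main terms (the paper writes $\tfrac{1}{4\pi\Gamma(k-1)}$, which appears to be a typo), and your verification that a $\psi$ supported on $[a-\delta,\infty)$ still yields an admissible Poincar\'e series correctly fills the one point the paper leaves implicit.
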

\begin{proof}
Let $\mathds{1}_{>x}$ denote the indicator function of the open set $(x,\infty) \subset \R$.  In similar fashion to the previous proof, let $\psi(y)$ be a smooth approximation to \[\frac{1}{4\pi \Gamma(k-1)} \mathds{1}_{>\frac{k-1}{4 \pi X}}(y),\] and compute the answer.    
\end{proof}

The spectral theorem as used in this paper holds for $L^2$ functions which are also $C^\infty$ and bounded, however, it should also hold for a much wider class of functions, and thus our Main Theorem should in fact hold for a much wider class of functions.  In practice however, one may always obtain a corollary for a specific choice of $\psi$ by elementary arguments similar to the proof of Corollary 1, so we do not pursue the problem of expanding the class of functions for which the Main Theorem holds.  

\bibliographystyle{amsplain}	
\bibliography{myrefs}

\end{document}